\documentclass[12pt,a4paper]{article}

\usepackage[english]{babel}

\usepackage[a4paper,top=2cm,bottom=2cm,left=3cm,right=3cm,marginparwidth=1.75cm]{geometry}
	\usepackage{authblk}

\usepackage{amsmath}
\usepackage{amssymb}
\usepackage{amsthm}
\numberwithin{equation}{section}
\numberwithin{figure}{section}
\numberwithin{table}{section}
\usepackage{graphicx}
\usepackage[colorlinks=true, allcolors=blue]{hyperref}
\usepackage{booktabs}
\usepackage{multirow}
\usepackage{subcaption}

\newtheorem{theorem}{Theorem}[section]
\newtheorem{remark}{Remark}[section]

\newcounter{example}[section]
\renewcommand{\theexample}{\thesection.\arabic{example}}

\newcommand{\exampletitle}[2]{%
  \refstepcounter{example}%
  \label{#1}%
  \subsubsection*{Example~\theexample. #2}%
}

\title{A well-balanced path-conservative discontinuous Galerkin method with generalized hydrostatic reconstruction for non-conservative balance laws}
\author[1,2]{Ruilin Fan}
\author[1]{Yinhua Xia}
\author[1,2,$*$]{Yan Xu}
\affil[1]{School of Mathematical Sciences, University of Science and Technology of China}
\affil[2]{Laoshan Laboratory}
\affil[$*$]{Corresponding author, Email: yxu@ustc.edu.cn}

\date{}

\begin{document}

\maketitle

\begin{abstract}
We propose a high-order well-balanced path-conservative discontinuous Galerkin method for one-dimensional balance laws with non-conservative products. Generalized hydrostatic reconstruction at cell interfaces couples effective numerical fluxes with Dal Maso-LeFloch-Murat path integrals, while a cellwise reference-equilibrium decomposition cancels projection-induced residuals in the volume discretization. Under explicit assumptions on equilibrium recovery, admissibility, paths, and boundary data, the spatial discretization is path conservative, and prescribed rest and moving equilibria are exactly preserved by both the semi-discrete scheme and its fully discrete SSP-RK approximation. The framework is applied to the two-layer shallow water equations and a blood-flow model with spatially varying vessel properties. Numerical experiments confirm the designed order of accuracy and round-off-level preservation of smooth and discontinuous equilibria, demonstrate accurate resolution of small perturbations, and show that the interface treatment captures shocks and stationary material contacts without visible spurious oscillations.

\textbf{Keywords:} {well-balanced discontinuous Galerkin method; path-conservative scheme; non-conservative hyperbolic systems; generalized hydrostatic reconstruction; moving equilibria.}
\end{abstract}

\section{Introduction}

Hyperbolic balance laws are conservative when their spatial operator can be written as the divergence of a flux, up to prescribed source terms. Their integral form then determines the interface flux contribution, and discontinuities satisfy the classical Rankine--Hugoniot condition. Non-conservative balance laws contain additional products, typically of the form $G(\boldsymbol{u})\boldsymbol{u}_x$, that cannot in general be expressed as the derivative of a flux depending only on the state. Such products arise naturally in reduced, averaged, or coupled models where different phases, layers, materials, or closure variables interact. Examples include compressible two-medium and multiphase flows \cite{RHEBERGEN2008,cheng2022bound,KEMM2020104536}, multi-layer geophysical and two-layer shallow water systems \cite{kurganov2009central,hernandez2021central,ZHANG2025}, and shallow-flow moment models \cite{koellermeier2020analysis,KOELLERMEIER2022127166,fan2026well}. Because the non-conservative contribution is not determined by a conservative flux alone, jumps and discontinuities require additional structure beyond the standard conservative formulation.

Across discontinuities, a non-conservative product is not defined in the classical distributional sense, and the usual Rankine--Hugoniot condition does not provide a complete weak formulation. The theory of Dal Maso, LeFloch, and Murat (DLM) assigns a weak meaning to such products through a prescribed family of paths in the phase space \cite{Maso1995}. For each jump, the path specifies how the two states are connected in phase space, and the non-conservative contribution is defined by integration along this path. The path is therefore part of the weak formulation rather than a purely numerical choice. This path-dependent weak interpretation provides the theoretical basis for path-conservative approximations.

Path-conservative schemes generalize conservative methods by matching the numerical interface fluctuations to a prescribed DLM path family and its non-conservative path integral \cite{pares2006numerical}. This ties the numerical treatment of jumps to the selected weak formulation. In finite volume methods, Castro et al. developed high-order reconstruction-based schemes for general non-conservative systems \cite{castro2006high}. Central-upwind ideas were adapted to two-layer shallow water models and later cast as a second-order path-conservative method \cite{kurganov2009central,castro2019path}. Fifth-order alternative weighted essentially non-oscillatory (A-WENO) extensions followed \cite{CHU2022111508}. In the discontinuous Galerkin (DG) setting, Rhebergen et al. incorporated the DLM path contribution into space and space-time formulations \cite{RHEBERGEN2008}. Subsequent path-conservative DG schemes have been applied to compressible two-medium flows and two-layer shallow water equations \cite{cheng2022bound,ZHANG2025}. Guerrero Fern\'andez et al. developed a well-balanced ADER-DG method for a variable-density multilayer shallow water model \cite{guerrero2022aderdg}. Other high-order developments include arbitrary high-order derivative (ADER) schemes, path-conservative ADER-DG methods, ADER-WENO finite volume schemes, and entropy-stable DG spectral element methods \cite{DUMBSER20091731,KEMM2020104536,DUMBSER2014359,RENAC20191}. These methods systematically discretize path-dependent interface contributions. Nevertheless, path consistency alone does not guarantee convergence. Formally consistent schemes may still fail to recover the shock solution selected by the prescribed path or the underlying regularization \cite{hou1994nonconservative,CASTRO20088107,ABGRALL20102759}.

Well-balanced schemes are designed to preserve selected steady states exactly at the discrete level. Many balance laws admit nontrivial equilibria in which flux gradients are balanced by source terms and, for non-conservative systems, by non-conservative products. If these contributions are discretized independently, the resulting imbalance can generate spurious waves near steady flows, obscure small perturbations, or accumulate during long-time simulations. Well-balanced methods prevent this behavior by incorporating the equilibrium relation into the numerical fluxes, source discretization, and, when necessary, path-integral contributions. Foundational source treatments were developed by Greenberg and LeRoux, Bermudez and Vazquez, and LeVeque \cite{dgreenberg1996well,BERMUDEZ19941049,LEVEQUE1998346}. For the shallow water equations, hydrostatic reconstruction was introduced for lake-at-rest equilibria and later generalized to broader stationary states \cite{Audusse2004AFA,CASTRO2007}. Castro D\'iaz et al. combined GHR with high-order reconstruction to preserve every stationary shallow water solution \cite{CASTRODIAZ2013242}. Xing and Shu developed high-order well-balanced WENO and DG methods for systems with source terms \cite{XING2006567}. For moving-water equilibria, Noelle et al. developed high-order WENO finite volume schemes, and Xing developed an exactly well-balanced DG method \cite{NOELLE200729,XING2014536}. Britton and Xing developed well-balanced DG methods for blood-at-rest and moving-flow equilibria \cite{BRITTON2020104493}. Related constructions cover non-explicit steady states in the Ripa model \cite{desveaux2016well} and hydrostatic or moving equilibria in the Euler equations with gravity \cite{chandrashekar2015second,LI2016145,klingenberg2019arbitrary}. These developments establish steady-state preservation as a structural requirement distinct from path consistency. In high-order DG or path-conservative DG (PCDG) schemes, exact preservation further requires consistent treatment of both interface and cell-interior contributions.

Path-conservative and well-balanced schemes address different requirements: the former discretize path-dependent jumps, whereas the latter enforce steady-state cancellation. Recent work has begun to combine both goals. Equilibrium-preserving-space DG methods have been developed for moving-water equilibria in the shallow water equations and for general hyperbolic balance laws \cite{Zhang2023,zhang2024}. They have also been extended to high-order well-balanced path-conservative schemes for specific non-conservative shallow-water-type systems \cite{ZHANG2025,fan2026well}. However, using equilibrium variables as polynomial unknowns introduces local nonlinear stage solves and increases dependence on the model-specific equilibrium map. Flux-globalization central-upwind schemes are both well balanced and path conservative \cite{KURGANOV2023111773}; their original formulation is second order, retains central-upwind dissipation, and requires global-flux integration. A fifth-order A-WENO path-conservative central-upwind extension preserves lake-at-rest equilibria but does not provide high-order flux globalization for the more general equilibria considered here \cite{CHU2022111508}.

For high-order PCDG schemes, equilibrium imbalance arises at both interfaces and in cell interiors. Interface terms contain numerical fluxes and DLM path integrals, whereas interior terms are affected by polynomial projection and the discretization of sources and non-conservative products. GHR adjusts interface states while preserving specified equilibrium variables \cite{CASTRO2007}, but it does not by itself remove the projection-induced imbalance inside DG cells. Conversely, the reference-equilibrium methods of Noelle et al. and Xing recover a cellwise equilibrium from projected data and evaluate the residual relative to that state \cite{NOELLE200729,XING2014536}. This observation motivates combining GHR with a reference-equilibrium decomposition in a single path-conservative DG formulation.

This paper extends the conservative-variable reference-equilibrium/GHR strategy of Xing \cite{XING2014536} to balance laws with non-conservative products. The proposed PCDG-GHR method uses equilibrium variables to reconstruct interface states and couples the effective numerical fluxes to DLM path integrals. In each cell, a locally recovered reference equilibrium defines a perturbation residual that treats the conservative flux, non-conservative product, and source term relative to the same equilibrium state. We prove path conservation and exact well balancing under explicit recovery, path, admissibility, interface-matching, and boundary assumptions. Numerical experiments for the two-layer shallow water equations and a blood-flow model with heterogeneous vessel properties assess high-order accuracy, equilibrium preservation, small-perturbation resolution, and discontinuous-flow performance.

The remainder of this paper is organized as follows. Section~\ref{se:sys} presents the balance-law framework, DLM path setting, and standard semi-discrete PCDG formulation. Section~\ref{se:ghr} develops the PCDG-GHR scheme, including the GHR interface treatment, reference equilibrium interior residual, fully discrete scheme, and limiter compatibility. Section~\ref{sec:exactly-well-balanced-property} proves path conservation of the GHR interface fluctuations and exact well balancing at the semi-discrete and fully discrete levels. Section~\ref{se:applications} specifies the two model realizations. Section~\ref{se:nu} reports numerical results for the two-layer shallow water equations and the blood flow equations. Section~\ref{se:con} concludes the paper.

\section{Path-conservative DG framework}\label{se:sys}

We introduce the general balance-law framework and the equilibrium variables used in the method. We then describe the DLM path setting and the standard semi-discrete PCDG formulation.

\subsection{Non-conservative systems and equilibrium states}\label{subsec:general-form}

We consider a class of one-dimensional hyperbolic balance laws with non-conservative products,
\begin{equation}
\boldsymbol{u}_t + \boldsymbol{f}(\boldsymbol{u})_x + G(\boldsymbol{u})\boldsymbol{u}_x = \boldsymbol{s}(\boldsymbol{u},\boldsymbol{\sigma}),
\label{eq:nonconservative-system}
\end{equation}
where $\boldsymbol{u} \in \Omega \subset \mathbb{R}^m$ is the $m$-component state vector used in the DG spatial operator and $\Omega$ is the set of admissible states. The prescribed datum $\boldsymbol{\sigma}=\boldsymbol{\sigma}(x)$ may be scalar or vector valued; it may be supplied separately from $\boldsymbol{u}$ or included as stationary components of $\boldsymbol{u}$. In the latter case, those components are held at their prescribed values rather than updated as independent unknowns. The function $\boldsymbol{f}(\boldsymbol{u})$ is the conservative flux, $G(\boldsymbol{u})\boldsymbol{u}_x$ represents the non-conservative product, and $\boldsymbol{s}(\boldsymbol{u},\boldsymbol{\sigma})$ is the cellwise source. As in the model notation below, dependence of this source on prescribed spatial derivatives is implicit; these derivatives are evaluated within each cell. A source contribution at a jump of an external datum requires an augmented interface path; if the datum is already a stationary block of $\boldsymbol{u}$, its spatial derivative is included in $G(\boldsymbol{u})\boldsymbol{u}_x$ and is not counted again in $\boldsymbol{s}$. Conservative balance laws are included as the special case $G(\boldsymbol{u})\equiv 0$. For later use, we introduce the quasilinear matrix
\begin{equation}
\mathcal{A}(\boldsymbol{u}) = \frac{\partial \boldsymbol{f}}{\partial \boldsymbol{u}}(\boldsymbol{u}) + G(\boldsymbol{u}),
\label{eq:quasilinear-matrix}
\end{equation}
so that \eqref{eq:nonconservative-system} may be rewritten as
\begin{equation}
\boldsymbol{u}_t + \mathcal{A}(\boldsymbol{u})\boldsymbol{u}_x = \boldsymbol{s}(\boldsymbol{u},\boldsymbol{\sigma}).
\label{eq:quasilinear-system}
\end{equation}

Following \cite{CASTRO2007}, when the prescribed datum is external to $\boldsymbol{u}$, let $-C(\boldsymbol{u},\boldsymbol{\sigma})\boldsymbol{\sigma}_x$ denote its derivative contribution to the source. The augmented state vector and augmented matrix for interface jumps are
\begin{equation}
\boldsymbol{w}=\begin{pmatrix}\boldsymbol{u}\\\boldsymbol{\sigma}\end{pmatrix},
\qquad
\widetilde{\mathcal{A}}(\boldsymbol{w})=
\begin{pmatrix}
\mathcal{A}(\boldsymbol{u}) & C(\boldsymbol{u},\boldsymbol{\sigma})\\
0 & 0
\end{pmatrix}.
\label{eq:augmented-state-matrix}
\end{equation}
Here $C$ is the coefficient matrix of the prescribed-data derivative, and the zero rows express $\boldsymbol{\sigma}_t=0$. When $\boldsymbol{s}=-C(\boldsymbol{u},\boldsymbol{\sigma})\boldsymbol{\sigma}_x$, this construction absorbs the source into the augmented matrix and yields $\boldsymbol{w}_t+\widetilde{\mathcal{A}}(\boldsymbol{w})\boldsymbol{w}_x=0$, with zero source in the augmented system. This formulation follows \cite{CASTRO2007} and facilitates the subsequent path-conservation proof; the numerical scheme itself retains the original balance-law form with $\boldsymbol{s}(\boldsymbol{u},\boldsymbol{\sigma})$ and does not require an augmented formulation. If the prescribed components are already included in $\boldsymbol{u}$, they are not appended again: $\boldsymbol{w}=\boldsymbol{u}$, $\widetilde{\mathcal{A}}=\mathcal{A}$, and the additional zero block in the interface fluctuations is omitted.

Throughout, we assume that $\boldsymbol{f}$ is continuously differentiable on admissible states and that $\widetilde{\mathcal{A}}$ is continuous along the admissible paths considered below.

A stationary solution of \eqref{eq:nonconservative-system} satisfies
\begin{equation}
\boldsymbol{f}(\boldsymbol{u})_x + G(\boldsymbol{u})\boldsymbol{u}_x = \boldsymbol{s}(\boldsymbol{u},\boldsymbol{\sigma}).
\label{eq:steady-balance}
\end{equation}
In this paper, an equilibrium state means a stationary solution satisfying the steady balance relation \eqref{eq:steady-balance}. In general, this relation admits nontrivial equilibrium states rather than only constant equilibria. Preserving such states at the discrete level is the defining feature of a well-balanced numerical method.

The framework is intended for models whose target equilibria can be described by explicitly characterized equilibrium variables. We denote this vector of equilibrium variables by $\boldsymbol{v}$. Its components remain constant along the selected equilibrium state and are model-dependent; they may include energy-, enthalpy-, or Bernoulli-type invariants together with transport components such as discharge or momentum.

The construction below assumes that the state variables and the equilibrium variables can be related, for the selected target equilibrium family, by a model-dependent transformation
\begin{equation}
\boldsymbol{v}=\tau(\boldsymbol{u},\boldsymbol{\sigma}),
\qquad
\boldsymbol{u}=\tau^{-1}(\boldsymbol{v},\boldsymbol{\sigma})\in\Omega.
\label{eq:equilibrium-variable-transform}
\end{equation}
Here $\tau^{-1}$ denotes a local inverse on a selected branch. The existence of global equilibrium quantities is therefore not, by itself, sufficient for the method to be applied. One must first specify the target equilibrium family and select a locally recoverable branch of the map from the equilibrium variables and prescribed data back to a state in $\Omega$. In this paper, an admissible branch refers to such a locally selected inverse branch that satisfies the physical constraints encoded by the admissible state set and remains consistent with the selected flow state or flow branch. Continuity from the preceding time level or Runge--Kutta stage may be used when more than one admissible inverse is available.

\subsection{DLM theory and path definition}\label{subsec:dlm-path}

For discontinuous solutions, the non-conservative product $G(\boldsymbol{u})\boldsymbol{u}_x$ cannot be interpreted in the classical distributional sense. To give it a rigorous weak meaning, we follow the theory of Dal Maso, LeFloch, and Murat (DLM) \cite{Maso1995}. We first consider the homogeneous jump operator associated with \eqref{eq:quasilinear-system},
\begin{equation}
\boldsymbol{u}_t + \mathcal{A}(\boldsymbol{u})\boldsymbol{u}_x = 0,
\label{eq:homogeneous-quasilinear-system}
\end{equation}
where $\mathcal{A}(\boldsymbol{u})=\partial \boldsymbol{f}/\partial \boldsymbol{u} + G(\boldsymbol{u})$. This operator describes jumps of $\boldsymbol{u}$ at fixed external data and also applies when stationary data are already components of $\boldsymbol{u}$; it does not by itself account for jumps of data kept outside $\boldsymbol{u}$. In the DLM framework, the non-conservative product is defined through a family of paths in the phase space $\Omega$. Following Par\'{e}s \cite{pares2006numerical}, we consider a locally Lipschitz path family
\begin{equation}
\boldsymbol{\Phi} : [0,1]\times\Omega\times\Omega \to \Omega,
\label{eq:path-family}
\end{equation}
such that, for any pair of states $\boldsymbol{u}^-,\boldsymbol{u}^+\in\Omega$,
\begin{equation}
\boldsymbol{\Phi}(0;\boldsymbol{u}^-,\boldsymbol{u}^+) = \boldsymbol{u}^-,
\qquad
\boldsymbol{\Phi}(1;\boldsymbol{u}^-,\boldsymbol{u}^+) = \boldsymbol{u}^+.
\label{eq:path-endpoints}
\end{equation}
The path family also satisfies the consistency condition
\begin{equation}
\boldsymbol{\Phi}(s;\boldsymbol{u},\boldsymbol{u})=\boldsymbol{u},
\qquad
s\in[0,1],
\label{eq:path-consistency}
\end{equation}
so that every path integral with coincident endpoints vanishes.
Given such a path, the jump contribution of the non-conservative term across an interface is defined by
\begin{equation}
\int_0^1 G(\boldsymbol{\Phi}(s;\boldsymbol{u}^-,\boldsymbol{u}^+))\boldsymbol{\Phi}_s(s;\boldsymbol{u}^-,\boldsymbol{u}^+) \, ds.
\label{eq:nonconservative-jump}
\end{equation}
Accordingly, if a discontinuity propagates with speed $\xi$, then the associated weak solution must satisfy the generalized Rankine--Hugoniot condition
\begin{equation}
\int_0^1 \bigl(\xi I-\mathcal{A}(\boldsymbol{\Phi}(s;\boldsymbol{u}^-,\boldsymbol{u}^+))\bigr)\boldsymbol{\Phi}_s(s;\boldsymbol{u}^-,\boldsymbol{u}^+) \, ds = 0,
\label{eq:generalized-rankine-hugoniot}
\end{equation}
where $I$ denotes the identity matrix. This relation reduces to the classical Rankine--Hugoniot condition in the conservative case. It also shows that, for non-conservative systems, the choice of the path family is part of the definition of weak solutions, and therefore it directly influences the design of path-conservative numerical methods.

\subsection{Standard semi-discrete PCDG formulation}

Let $\{I_j\}$ be a partition of the computational domain, and let $V_h^k$ denote the discontinuous Galerkin space of piecewise polynomials of degree at most $k$ on this partition. A standard semi-discrete path-conservative discontinuous Galerkin formulation for \eqref{eq:nonconservative-system} reads as follows \cite{RHEBERGEN2008,cheng2022bound,ZHANG2025}: find $\boldsymbol{u}_h$ with evolved components in $V_h^k$ and any stationary prescribed components supplied as data, such that, for any test function $\boldsymbol{\phi} \in (V_h^k)^m$,
 \begin{align}\label{eq:standard-pcdg-form}
&\frac{d}{dt}\int_{I_j} \boldsymbol{u}_h \cdot \boldsymbol{\phi} \, dx
=
\int_{I_j} \boldsymbol{f}(\boldsymbol{u}_h)\cdot \boldsymbol{\phi}_x \, dx
- \widehat{\boldsymbol{f}}_{j+\frac{1}{2}}\cdot \boldsymbol{\phi}^-_{j+\frac{1}{2}}
+ \widehat{\boldsymbol{f}}_{j-\frac{1}{2}}\cdot \boldsymbol{\phi}^+_{j-\frac{1}{2}} \\
&\quad
- \int_{I_j} G(\boldsymbol{u}_h)(\boldsymbol{u}_h)_x \cdot \boldsymbol{\phi} \, dx
+ \int_{I_j} \boldsymbol{s}(\boldsymbol{u}_h,\boldsymbol{\sigma})\cdot \boldsymbol{\phi} \, dx
- \frac{1}{2}\boldsymbol{\phi}^-_{j+\frac{1}{2}}\cdot \boldsymbol{g}_{\boldsymbol{\Phi},j+\frac{1}{2}}
- \frac{1}{2}\boldsymbol{\phi}^+_{j-\frac{1}{2}}\cdot \boldsymbol{g}_{\boldsymbol{\Phi},j-\frac{1}{2}}.\nonumber
\end{align}
Here $\widehat{\boldsymbol{f}}_{j+\frac{1}{2}}=\widehat{\boldsymbol{f}}(\boldsymbol{u}^-_{j+\frac{1}{2}},\boldsymbol{u}^+_{j+\frac{1}{2}})$ is a consistent numerical flux for the conservative part, $\boldsymbol{u}^\pm_{j+\frac{1}{2}}$ are the left and right traces of the numerical solution at the interface $x_{j+\frac{1}{2}}$, and the test function $\boldsymbol{\phi}$ is taken from the same approximation space. This is the semi-discrete PCDG weak form with the source term retained explicitly. When prescribed data form stationary rows, their numerical flux and fluctuation rows are zero and only the evolved components are time advanced.

The interface contribution of the non-conservative product is defined through the path integral
\begin{equation}
\boldsymbol{g}_{\boldsymbol{\Phi},j+\frac{1}{2}}
:=
\int_0^1
G(\boldsymbol{\Phi}_{j+\frac{1}{2}}(s))
(\boldsymbol{\Phi}_{j+\frac{1}{2}})_s(s)\,ds,
\label{eq:path-integral-term}
\end{equation}
where
\begin{equation}
\boldsymbol{\Phi}_{j+\frac{1}{2}}(s)=\boldsymbol{\Phi}(s;\boldsymbol{u}^-_{j+\frac{1}{2}},\boldsymbol{u}^+_{j+\frac{1}{2}}).
\label{eq:interface-path}
\end{equation}
The path constructions used for the two applications are specified in Section~\ref{se:applications}.

\begin{remark}
The semi-discrete formulation \eqref{eq:standard-pcdg-form} is consistent and path-conservative in the sense of Par\'es \cite{pares2006numerical}. Indeed, it can be written in fluctuation form with fluctuations that vanish for coincident states and whose sum equals the DLM integral of the quasilinear operator along the prescribed path. The detailed fluctuation representation and verification are given in Appendix~\ref{app:standard-pcdg-path-conservation}.
\end{remark}

\section{The well-balanced PCDG-GHR scheme}
\label{se:ghr}
This section constructs a high-order path-conservative discontinuous Galerkin (PCDG) scheme that exactly preserves the prescribed equilibrium states. The interface treatment uses generalized hydrostatic reconstruction (GHR) \cite{CASTRO2007}, whereas the interior treatment extends the reference-equilibrium discretization of Xing \cite{XING2014536}. Together, these two components eliminate equilibrium residuals at cell interfaces and within cells.

For the selected target equilibrium, the numerical state is decomposed as
\begin{equation}
\boldsymbol{u} = \boldsymbol{u}^e + \boldsymbol{u}^r,
\label{eq:equilibrium-decomposition}
\end{equation}
where $\boldsymbol{u}^e$ denotes the equilibrium component associated with the target equilibrium state and $\boldsymbol{u}^r$ is the corresponding perturbation component. In the semi-discrete setting, this leads to a cellwise reference equilibrium state $\boldsymbol{u}^e$, polynomial in its evolved components, and its perturbation part $\boldsymbol{u}^r = \boldsymbol{u}_h - \boldsymbol{u}^e$. The construction of $\boldsymbol{u}^e$ will be specified in the subsequent subsections.

The nonlinear fluxes, non-conservative products, and sources in the general construction below are evaluated at the numerical state $\boldsymbol{u}_h$ and the reference equilibrium state $\boldsymbol{u}^e$. The cellwise recovery of this state is specified below.

A semi-discrete well-balanced PCDG scheme is defined as follows: find $\boldsymbol{u}_h$ with the same evolved and prescribed components as above such that, for any test function $\boldsymbol{\phi} \in (V_h^k)^m$,
\begin{equation}
\begin{aligned}
\frac{d}{dt}\int_{I_j} \boldsymbol{u}_h \cdot \boldsymbol{\phi} \, dx
=\quad
&\int_{I_j} \boldsymbol{f}(\boldsymbol{u}_h)\cdot \boldsymbol{\phi}_x \, dx
- \widehat{\boldsymbol{f}}_{j+\frac{1}{2}}^{\,\mathrm{eff},L}\cdot \boldsymbol{\phi}_{j+\frac{1}{2}}^-
+ \widehat{\boldsymbol{f}}_{j-\frac{1}{2}}^{\,\mathrm{eff},R}\cdot \boldsymbol{\phi}_{j-\frac{1}{2}}^+ \\
&- \int_{I_j} G(\boldsymbol{u}_h)(\boldsymbol{u}_h)_x \cdot \boldsymbol{\phi} \, dx
- \frac{1}{2}\boldsymbol{\phi}_{j+\frac{1}{2}}^- \cdot \boldsymbol{g}_{\boldsymbol{\Phi},j+\frac{1}{2}}^{\,*}
- \frac{1}{2}\boldsymbol{\phi}_{j-\frac{1}{2}}^+ \cdot \boldsymbol{g}_{\boldsymbol{\Phi},j-\frac{1}{2}}^{\,*} \\
&+ \mathcal{R}_{j}^{\mathrm{wb}}(\boldsymbol{u}_h,\boldsymbol{u}^e,\boldsymbol{\phi}),
\end{aligned}
\label{eq:well-balanced-residual-form}
\end{equation}
where $\widehat{\boldsymbol{f}}^{\,\mathrm{eff},L}$ and $\widehat{\boldsymbol{f}}^{\,\mathrm{eff},R}$ are the effective interface fluxes produced by the GHR procedure, $\boldsymbol{g}_{\boldsymbol{\Phi}}^{\,*}$ is the modified path integral evaluated from the reconstructed interface states, and the well-balanced residual $\mathcal{R}_{j}^{\mathrm{wb}}(\boldsymbol{u}_h,\boldsymbol{u}^e,\boldsymbol{\phi})$ is defined, in accordance with \cite{XING2014536}, by
\begin{equation}
\begin{aligned}
\mathcal{R}_{j}^{\mathrm{wb}}(\boldsymbol{u}_h,\boldsymbol{u}^e,\boldsymbol{\phi})
=\quad
&-\int_{I_j} \boldsymbol{f}(\boldsymbol{u}^e)\cdot \boldsymbol{\phi}_x \, dx
+ \boldsymbol{f}(\boldsymbol{u}_{j+\frac{1}{2}}^{e,-})\cdot \boldsymbol{\phi}_{j+\frac{1}{2}}^-
- \boldsymbol{f}(\boldsymbol{u}_{j-\frac{1}{2}}^{e,+})\cdot \boldsymbol{\phi}_{j-\frac{1}{2}}^+ \\
&+ \int_{I_j} G(\boldsymbol{u}^e)(\boldsymbol{u}^e)_x \cdot \boldsymbol{\phi} \, dx
+ \int_{I_j}\bigl[\boldsymbol{s}(\boldsymbol{u}_h,\boldsymbol{\sigma})-\boldsymbol{s}(\boldsymbol{u}^e,\boldsymbol{\sigma})\bigr]\cdot\boldsymbol{\phi}\,dx.
\end{aligned}
\label{eq:well-balanced-residual-definition}
\end{equation}

By collecting the equilibrium contributions into $\mathcal{R}_{j}^{\mathrm{wb}}$ and then rearranging terms, one obtains the following equivalent perturbation form:
\begin{align}
&\frac{d}{dt}\int_{I_j} \boldsymbol{u}_h \cdot \boldsymbol{\phi} \, dx
\quad\nonumber\\
&=\int_{I_j} \bigl[\boldsymbol{f}(\boldsymbol{u}_h)-\boldsymbol{f}(\boldsymbol{u}^e)\bigr]\cdot \boldsymbol{\phi}_x \, dx 
- \bigl[\widehat{\boldsymbol{f}}_{j+\frac{1}{2}}^{\,\mathrm{eff},L} - \boldsymbol{f}(\boldsymbol{u}_{j+\frac{1}{2}}^{e,-})\bigr]\cdot \boldsymbol{\phi}_{j+\frac{1}{2}}^- 
+ \bigl[\widehat{\boldsymbol{f}}_{j-\frac{1}{2}}^{\,\mathrm{eff},R} - \boldsymbol{f}(\boldsymbol{u}_{j-\frac{1}{2}}^{e,+})\bigr]\cdot \boldsymbol{\phi}_{j-\frac{1}{2}}^+ \nonumber\\
&- \int_{I_j} \bigl[G(\boldsymbol{u}_h)(\boldsymbol{u}_h)_x - G(\boldsymbol{u}^e)(\boldsymbol{u}^e)_x\bigr]\cdot \boldsymbol{\phi} \, dx
+ \int_{I_j}\bigl[\boldsymbol{s}(\boldsymbol{u}_h,\boldsymbol{\sigma})-\boldsymbol{s}(\boldsymbol{u}^e,\boldsymbol{\sigma})\bigr]\cdot\boldsymbol{\phi}\,dx \nonumber\\
&- \frac{1}{2}\boldsymbol{\phi}_{j+\frac{1}{2}}^- \cdot \boldsymbol{g}_{\boldsymbol{\Phi},j+\frac{1}{2}}^{\,*}
- \frac{1}{2}\boldsymbol{\phi}_{j-\frac{1}{2}}^+ \cdot \boldsymbol{g}_{\boldsymbol{\Phi},j-\frac{1}{2}}^{\,*}.
\label{eq:equivalent-perturbation-form}
\end{align}
Equations \eqref{eq:well-balanced-residual-form} and \eqref{eq:equivalent-perturbation-form} are therefore two equivalent formulations of the same semi-discrete well-balanced PCDG method.

The following subsections specify the GHR interface terms and the reference-equilibrium interior residual in \eqref{eq:well-balanced-residual-form}--\eqref{eq:equivalent-perturbation-form}, followed by the fully discrete time update.

\subsection{Interface treatment by generalized hydrostatic reconstruction}
The interface treatment is based on the GHR technique of \cite{CASTRO2007}. Interface states are reconstructed from the equilibrium variables associated with the target steady state. When the discrete left and right equilibrium variables agree, the reconstruction places both states at the same point on a common reference datum. The interface fluctuations $\boldsymbol{D}^+$ and $\boldsymbol{D}^-$ in \eqref{eq:abstract-fluctuation-form} then vanish by consistency, suppressing spurious waves at equilibrium.

In the semi-discrete PCDG framework, GHR defines the effective numerical fluxes $\widehat{\boldsymbol{f}}^{\,\mathrm{eff}}$ and modified path integrals $\boldsymbol{g}_{\boldsymbol{\Phi},j+\frac{1}{2}}^{\,*}$ so that the interface contribution vanishes at the prescribed equilibrium. Section~\ref{subse:state} defines the reconstructed states, and Section~\ref{subse:flux} constructs the corresponding effective fluxes and path integrals.

\subsubsection{State reconstruction at interfaces}
\label{subse:state}
We use the equilibrium-variable transformation introduced in \eqref{eq:equilibrium-variable-transform}. Thus, for the target equilibrium family, the equilibrium variables $\boldsymbol{v}$ remain constant along the selected equilibrium state, while the inverse map $\tau^{-1}$ recovers states on the selected admissible branch. The interface reconstruction below applies this conversion after placing the left and right traces on a common geometric or material datum. The model-specific volume evaluation rules are specified separately in Section~\ref{se:applications}.

At $x_{j+\frac{1}{2}}$, let $\boldsymbol{u}_{h,j+\frac{1}{2}}^\pm$, $\boldsymbol{u}_{j+\frac{1}{2}}^{e,\pm}$, $\boldsymbol{u}_{j+\frac{1}{2}}^{\mathrm{eq},\pm}$, and $\boldsymbol{u}_{j+\frac{1}{2}}^{r,\pm}$ denote the left and right traces of the DG solution, the reference equilibrium projection, the recovered equilibrium state, and the perturbation $\boldsymbol{u}^r=\boldsymbol{u}_h-\boldsymbol{u}^e$, respectively. The equilibrium recovery and decomposition are detailed below. With $\boldsymbol{\sigma}_{j+\frac{1}{2}}^\pm=\boldsymbol{\sigma}(x_{j+\frac{1}{2}}^\pm)$ denoting the one-sided prescribed data, the interface equilibrium variables are defined by
\begin{equation}
\boldsymbol{v}_{j+\frac{1}{2}}^\pm
=
\tau\!\left(\boldsymbol{u}_{j+\frac{1}{2}}^{\mathrm{eq},\pm}+\boldsymbol{u}_{j+\frac{1}{2}}^{r,\pm},\boldsymbol{\sigma}_{j+\frac{1}{2}}^\pm\right).
\label{eq:ghr-interface-equilibrium-traces}
\end{equation}
To compare the two traces under a common geometric or material configuration, we choose a common reference datum $\boldsymbol{\sigma}_{j+\frac{1}{2}}^*$. This value is selected so that both inverse states are solvable on the prescribed admissible branches, namely
\begin{equation}
\tau^{-1}(\boldsymbol{v}_{j+\frac{1}{2}}^-,\boldsymbol{\sigma}_{j+\frac{1}{2}}^*)\in\Omega,
\qquad
\tau^{-1}(\boldsymbol{v}_{j+\frac{1}{2}}^+,\boldsymbol{\sigma}_{j+\frac{1}{2}}^*)\in\Omega.
\label{eq:ghr-reference-datum-v2}
\end{equation}

The generalized hydrostatic reconstruction is then performed by keeping the equilibrium variable fixed and recovering the states on the common reference datum $\boldsymbol{\sigma}_{j+\frac{1}{2}}^*$:
\begin{equation}
\boldsymbol{u}_{j+\frac{1}{2}}^{*,\pm}
=
\tau^{-1}(\boldsymbol{v}_{j+\frac{1}{2}}^\pm,\boldsymbol{\sigma}_{j+\frac{1}{2}}^*).
\label{eq:ghr-reconstructed-conservative-state-v2}
\end{equation}
The corresponding augmented states $\boldsymbol{w}_{j+\frac{1}{2}}^{*,\pm}$ are obtained from these reconstructed states and the common datum by the augmentation rule in Section~\ref{subsec:general-form}, without duplicating prescribed components already included in the state.
By construction, the reconstructed states $\boldsymbol{u}_{j+\frac{1}{2}}^{*,\pm}$ preserve the equilibrium-variable values of the corresponding interface traces used in \eqref{eq:ghr-interface-equilibrium-traces}, but they are now evaluated with identical prescribed data. These reconstructed states are therefore the natural inputs for the interface numerical fluxes and the non-conservative path integrals.

\subsubsection{Evaluation of numerical fluxes and path integrals}
\label{subse:flux}
Once the reconstructed states $\boldsymbol{u}_{j+\frac{1}{2}}^{*,-}$ and $\boldsymbol{u}_{j+\frac{1}{2}}^{*,+}$ are obtained, we evaluate the conservative part of the interface contribution by the Lax--Friedrichs flux
\begin{equation}
\widehat{\boldsymbol{f}}_{LF}(\boldsymbol{u}_{j+\frac{1}{2}}^{*,-},\boldsymbol{u}_{j+\frac{1}{2}}^{*,+})
=
\frac{1}{2}\bigl(\boldsymbol{f}(\boldsymbol{u}_{j+\frac{1}{2}}^{*,-})+\boldsymbol{f}(\boldsymbol{u}_{j+\frac{1}{2}}^{*,+})\bigr)
-\frac{\alpha}{2}\bigl(\boldsymbol{u}_{j+\frac{1}{2}}^{*,+}-\boldsymbol{u}_{j+\frac{1}{2}}^{*,-}\bigr),
\label{eq:ghr-lf-flux-v2}
\end{equation}
and
\begin{equation}
    \alpha=\max_{\boldsymbol{u}}\{|\lambda_1(\boldsymbol{u})|,\dots,|\lambda_{m}(\boldsymbol{u})|\},
\end{equation}
with $\lambda_i(\boldsymbol{u})$ the eigenvalues of the Jacobian matrix $\mathcal{A}$ in \eqref{eq:quasilinear-matrix}, and the maximum is taken globally over the computational domain.
To match the fluctuation form induced by \eqref{eq:well-balanced-residual-form}, we define the one-sided effective numerical fluxes by
\begin{align}
\widehat{\boldsymbol{f}}_{j+\frac{1}{2}}^{\,\mathrm{eff},L}
&=
\widehat{\boldsymbol{f}}_{LF}(\boldsymbol{u}_{j+\frac{1}{2}}^{*,-},\boldsymbol{u}_{j+\frac{1}{2}}^{*,+})
-\boldsymbol{f}(\boldsymbol{u}_{j+\frac{1}{2}}^{*,-})
+\boldsymbol{f}(\boldsymbol{u}_{h,j+\frac{1}{2}}^-),
\label{eq:ghr-effective-flux-left-v2}
\\
\widehat{\boldsymbol{f}}_{j+\frac{1}{2}}^{\,\mathrm{eff},R}
&=
\widehat{\boldsymbol{f}}_{LF}(\boldsymbol{u}_{j+\frac{1}{2}}^{*,-},\boldsymbol{u}_{j+\frac{1}{2}}^{*,+})
-\boldsymbol{f}(\boldsymbol{u}_{j+\frac{1}{2}}^{*,+})
+\boldsymbol{f}(\boldsymbol{u}_{h,j+\frac{1}{2}}^+).
\label{eq:ghr-effective-flux-right-v2}
\end{align}

\begin{remark}
The original hydrostatic reconstruction of Audusse et al.~\cite{Audusse2004AFA} is written in terms of two one-sided interface fluxes, since the interface correction is distributed to the left and right sides of each cell interface; see Eq.~(2.16) of \cite{Audusse2004AFA}. A commonly used alternative is to introduce a single modified flux
\begin{equation}
\widehat{\boldsymbol{f}}_{j+\frac{1}{2}}^{\,\mathrm{mod}}
=
\frac{1}{2}\bigl(\boldsymbol{f}(\boldsymbol{u}_{h,j+\frac{1}{2}}^-)+\boldsymbol{f}(\boldsymbol{u}_{h,j+\frac{1}{2}}^+)\bigr)
-\frac{\alpha}{2}\bigl(\boldsymbol{u}_{j+\frac{1}{2}}^{*,+}-\boldsymbol{u}_{j+\frac{1}{2}}^{*,-}\bigr).
\label{eq:remark31-modified-flux}
\end{equation}
This modified flux is related to the one-sided effective fluxes by
\begin{equation}
\widehat{\boldsymbol{f}}_{j+\frac{1}{2}}^{\,\mathrm{mod}}
=
\frac{1}{2}\Bigl(
\widehat{\boldsymbol{f}}_{j+\frac{1}{2}}^{\,\mathrm{eff},L}
+
\widehat{\boldsymbol{f}}_{j+\frac{1}{2}}^{\,\mathrm{eff},R}
\Bigr).
\label{eq:remark31-average-relation}
\end{equation}
Therefore, \eqref{eq:remark31-modified-flux} is not identical to either one-sided effective flux taken separately; rather, it is their symmetric average. In this sense, the modified flux form is an algebraic recombination of the original hydrostatic reconstruction formulation of Audusse et al.
\end{remark}

At the common datum, we select a reconstructed-state path from the path family in Section~\ref{subsec:dlm-path} that connects the two reconstructed states:
\begin{equation}
\boldsymbol{\Phi}_{j+\frac{1}{2}}^{*}(s)
=
\boldsymbol{\Phi}\bigl(s;\boldsymbol{u}_{j+\frac{1}{2}}^{*,-},
\boldsymbol{u}_{j+\frac{1}{2}}^{*,+}\bigr),
\qquad
s\in[0,1].
\label{eq:ghr-reconstructed-path-v2}
\end{equation}
If prescribed data are components of $\boldsymbol{u}$, they remain fixed at $\boldsymbol{\sigma}^*$ throughout this path.
The corresponding modified path integral is
\begin{equation}
\boldsymbol{g}_{\boldsymbol{\Phi},j+\frac{1}{2}}^{\,*}
=
\int_{0}^{1}
G(\boldsymbol{\Phi}_{j+\frac{1}{2}}^{*}(s))(\boldsymbol{\Phi}_{j+\frac{1}{2}}^{*})_{s}(s)\,ds.
\label{eq:ghr-modified-path-integral-v2}
\end{equation}
The integral in \eqref{eq:ghr-modified-path-integral-v2} is evaluated with a quadrature rule appropriate to the selected path. The complete model-specific connections and quadrature choices are specified in Section~\ref{se:applications}. Section~\ref{subse:wbin} shows that \eqref{eq:ghr-effective-flux-left-v2}--\eqref{eq:ghr-modified-path-integral-v2} cancel the interface contribution at the target equilibrium.

\subsubsection{Well-balanced property at the interface}
\label{subse:wbin}
We next establish exact interface balance for the generalized hydrostatic reconstruction of Section~\ref{subse:state}. Assume that, at equilibrium, the left and right interface traces satisfy
\begin{equation}
\boldsymbol{v}_{j+\frac{1}{2}}^-
=
\boldsymbol{v}_{j+\frac{1}{2}}^+.
\label{eq:ghr-equilibrium-variable-coincidence}
\end{equation}
This discrete interface condition must be verified for the selected equilibrium representation; it does not follow solely from constancy of the equilibrium variables in the continuous equilibrium state. Here the selected admissible branches are compatible if, when the left and right equilibrium variables agree, inversion at the same common datum returns the same reconstructed state. Since both reconstructed states are recovered on the same common reference datum $\boldsymbol{\sigma}_{j+\frac{1}{2}}^*$ and such compatible admissible branches, it follows from \eqref{eq:ghr-reconstructed-conservative-state-v2} that
\begin{equation}
\boldsymbol{u}_{j+\frac{1}{2}}^{*,-}
=
\boldsymbol{u}_{j+\frac{1}{2}}^{*,+}
=:\boldsymbol{u}_{j+\frac{1}{2}}^*.
\label{eq:ghr-star-state-coincidence}
\end{equation}

Consequently, the dissipative part of the local Lax--Friedrichs flux vanishes, and \eqref{eq:ghr-lf-flux-v2} reduces to
\begin{equation}
\widehat{\boldsymbol{f}}_{LF}(\boldsymbol{u}_{j+\frac{1}{2}}^{*,-},\boldsymbol{u}_{j+\frac{1}{2}}^{*,+})
=
\boldsymbol{f}(\boldsymbol{u}_{j+\frac{1}{2}}^*).
\label{eq:ghr-lf-flux-equilibrium}
\end{equation}
Substituting \eqref{eq:ghr-star-state-coincidence} and \eqref{eq:ghr-lf-flux-equilibrium} into \eqref{eq:ghr-effective-flux-left-v2}--\eqref{eq:ghr-effective-flux-right-v2}, we obtain
\begin{equation}
\widehat{\boldsymbol{f}}_{j+\frac{1}{2}}^{\,\mathrm{eff},L}
=
\boldsymbol{f}(\boldsymbol{u}_{h,j+\frac{1}{2}}^-),
\qquad
\widehat{\boldsymbol{f}}_{j+\frac{1}{2}}^{\,\mathrm{eff},R}
=
\boldsymbol{f}(\boldsymbol{u}_{h,j+\frac{1}{2}}^+).
\label{eq:ghr-effective-flux-equilibrium}
\end{equation}
Thus, at the target equilibrium, the one-sided effective fluxes reduce to the physical fluxes evaluated at the corresponding input traces.

For the non-conservative contribution, \eqref{eq:ghr-star-state-coincidence} and the consistency condition \eqref{eq:path-consistency} imply that the reconstructed-state path \eqref{eq:ghr-reconstructed-path-v2} is constant and therefore has zero derivative. Thus, \eqref{eq:ghr-modified-path-integral-v2} gives
\begin{equation}
\boldsymbol{g}_{\boldsymbol{\Phi},j+\frac{1}{2}}^{\,*}
=
0.
\label{eq:ghr-path-integral-equilibrium}
\end{equation}
Therefore, both the conservative and non-conservative interface corrections vanish in the sense required by the target equilibrium state.

This conclusion can also be interpreted in the fluctuation framework of \eqref{eq:abstract-fluctuation-form}. Indeed, the interface contribution in \eqref{eq:well-balanced-residual-form} corresponds to the modified fluctuations
\begin{equation}
\boldsymbol{D}_{j+\frac{1}{2}}^-
=
\widehat{\boldsymbol{f}}_{j+\frac{1}{2}}^{\,\mathrm{eff},L}
-\boldsymbol{f}(\boldsymbol{u}_{h,j+\frac{1}{2}}^-)
+\frac{1}{2}\boldsymbol{g}_{\boldsymbol{\Phi},j+\frac{1}{2}}^{\,*},
\qquad
\boldsymbol{D}_{j+\frac{1}{2}}^+
=
\boldsymbol{f}(\boldsymbol{u}_{h,j+\frac{1}{2}}^+)
-\widehat{\boldsymbol{f}}_{j+\frac{1}{2}}^{\,\mathrm{eff},R}
+\frac{1}{2}\boldsymbol{g}_{\boldsymbol{\Phi},j+\frac{1}{2}}^{\,*}.
\label{eq:ghr-modified-fluctuations}
\end{equation}
Combining \eqref{eq:ghr-effective-flux-equilibrium} and \eqref{eq:ghr-path-integral-equilibrium} yields
\begin{equation}
\boldsymbol{D}_{j+\frac{1}{2}}^-
=
0,
\qquad
\boldsymbol{D}_{j+\frac{1}{2}}^+
=
0.
\label{eq:ghr-vanishing-fluctuations}
\end{equation}
Therefore, the GHR interface fluctuations vanish at the target equilibrium, and the scheme introduces no spurious waves at cell interfaces.

\subsection{Interior treatment: Volume integrals via reference equilibrium decomposition}

GHR eliminates the interface imbalance, but polynomial projection can still prevent pointwise balance among the flux derivative $\boldsymbol{f}(\boldsymbol{u})_x$, the non-conservative product $G(\boldsymbol{u})\boldsymbol{u}_x$, and the source $\boldsymbol{s}(\boldsymbol{u},\boldsymbol{\sigma})$ within a cell. We therefore extend Xing's reference-equilibrium strategy \cite{XING2014536} to the present path-conservative setting. Decomposing the numerical solution into equilibrium and perturbation components removes the projection-induced equilibrium residual from the volume terms.

\subsubsection{Recovery of cellwise reference equilibrium values}
\label{subse:recovery}
Because projected equilibrium data generally fail to satisfy the equilibrium relations pointwise, the interior treatment first recovers a cellwise equilibrium representation. Let $\boldsymbol{u}_j(x):=\boldsymbol{u}_h|_{I_j}$ and let $\boldsymbol{\sigma}_j(x):=\boldsymbol{\sigma}(x)|_{I_j}$ denote the prescribed datum used in spatial evaluations on $I_j$; the latter is a projected representation only when the discretization projects the data. In cellwise formulas without a subscript, $\boldsymbol{\sigma}$ denotes the same datum. We adopt a reference-equilibrium recovery strategy in the spirit of Noelle, Xing, and Shu \cite{NOELLE200729}.

Let $\bar{\boldsymbol{v}}_j$ denote the constant equilibrium variable to be recovered in cell $I_j$. The components of $\bar{\boldsymbol{v}}_j$ are determined by matching a model-dependent set of cell averages. We denote the corresponding matching map by $\boldsymbol{\mu}(\boldsymbol{u},\boldsymbol{\sigma})$; its components are chosen to identify the selected equilibrium branch and may depend on the prescribed data. From the projected data in cell $I_j$, we compute
\begin{equation}
\bar{\boldsymbol{\mu}}_j
=
\frac{1}{\Delta x_j}\int_{I_j}\boldsymbol{\mu}(\boldsymbol{u}_j(x),\boldsymbol{\sigma}_j(x))\,dx.
\label{eq:wb-interior-matching-average}
\end{equation}
The cellwise reference equilibrium variable $\bar{\boldsymbol{v}}_j$ is then defined by the matching condition
\begin{equation}
\bar{\boldsymbol{\mu}}_j
=
\frac{1}{\Delta x_j}\int_{I_j}
\boldsymbol{\mu}\bigl(\tau^{-1}(\bar{\boldsymbol{v}}_j,\boldsymbol{\sigma}_j(x)),\boldsymbol{\sigma}_j(x)\bigr)\,dx,
\label{eq:wb-interior-reference-v}
\end{equation}
where the inverse is taken on the selected admissible branch. This condition states that the recovered equilibrium state reproduces the chosen cell averages of the projected numerical state.

By the cell-average matching condition and the equilibrium relations, the recovered $\bar{\boldsymbol{v}}_j$ identifies the cellwise reference equilibrium state.

\subsubsection{Reference equilibrium decomposition and residual approximation}

The recovery procedure of Section~\ref{subse:recovery} yields the reference equilibrium variable $\bar{\boldsymbol{v}}_j$ in each cell. We denote the corresponding exact equilibrium state by
\begin{equation}
\boldsymbol{u}^{\mathrm{eq}}_j(x)
:=
\tau^{-1}(\bar{\boldsymbol{v}}_j,\boldsymbol{\sigma}_j(x)).
\label{eq:wb-interior-exact-equilibrium}
\end{equation}
Since the evolved components of this state are generally not polynomials of degree $k$, we define their reference equilibrium projection by
\begin{equation}
\boldsymbol{u}^e|_{I_j}
=
\Pi_j\boldsymbol{u}^{\mathrm{eq}}_j(x)
\quad\text{in the evolved components},
\label{eq:wb-interior-reference-projection}
\end{equation}
where $\Pi_j$ denotes the cellwise $L^2$ projection onto $V_h^k$. Any stationary prescribed components are retained unchanged in both $\boldsymbol{u}_h$ and $\boldsymbol{u}^e$.

Since the evolved components of $\boldsymbol{u}_h$ and $\boldsymbol{u}^e$ belong to the same polynomial space and their prescribed components agree, the numerical solution can be decomposed as
\begin{equation}
\boldsymbol{u}_h|_{I_j}
=
\boldsymbol{u}^e|_{I_j}
+
\boldsymbol{u}^r|_{I_j},
\qquad
\boldsymbol{u}^r|_{I_j}
=
\boldsymbol{u}_h|_{I_j}
-\boldsymbol{u}^e|_{I_j}.
\label{eq:wb-interior-decomposition}
\end{equation}
This is the reference equilibrium decomposition used throughout the interior treatment. Here $\boldsymbol{u}^{\mathrm{eq}}_j$ is the exact recovered equilibrium state, $\boldsymbol{u}^e$ is its DG projection in the evolved components with any prescribed components retained, and $\boldsymbol{u}^r$ is polynomial in the evolved components and zero in the prescribed components.

When an explicit source term is present, it is decomposed in the abstract form
\begin{equation}
\int_{I_j} \boldsymbol{s}(\boldsymbol{u}_h,\boldsymbol{\sigma})\cdot\boldsymbol{\phi}\,dx
=
\int_{I_j} \boldsymbol{s}(\boldsymbol{u}^e,\boldsymbol{\sigma})\cdot\boldsymbol{\phi}\,dx
+
\int_{I_j}\bigl[\boldsymbol{s}(\boldsymbol{u}_h,\boldsymbol{\sigma})-\boldsymbol{s}(\boldsymbol{u}^e,\boldsymbol{\sigma})\bigr]\cdot\boldsymbol{\phi}\,dx.
\label{eq:wb-interior-source-decomposition}
\end{equation}
This decomposition separates the equilibrium contribution from the nonequilibrium correction without relying on any model-specific linear structure of the source term.

The exact equilibrium state $\boldsymbol{u}^{\mathrm{eq}}_j$ satisfies the weak balance relation
\begin{align*}
\int_{I_j} \boldsymbol{s}\bigl(\boldsymbol{u}^{\mathrm{eq}}_j,\boldsymbol{\sigma}\bigr)\cdot\boldsymbol{\phi}\,dx
&=-\int_{I_j}\boldsymbol{f}\bigl(\boldsymbol{u}^{\mathrm{eq}}_j\bigr)\cdot\boldsymbol{\phi}_x\,dx
+\boldsymbol{f}\bigl(\boldsymbol{u}^{\mathrm{eq},-}_{j+\frac{1}{2}}\bigr)\cdot\boldsymbol{\phi}_{j+\frac{1}{2}}^- \\
&\quad-\boldsymbol{f}\bigl(\boldsymbol{u}^{\mathrm{eq},+}_{j-\frac{1}{2}}\bigr)\cdot\boldsymbol{\phi}_{j-\frac{1}{2}}^+
+\int_{I_j}G\bigl(\boldsymbol{u}^{\mathrm{eq}}_j\bigr)\bigl(\boldsymbol{u}^{\mathrm{eq}}_j\bigr)_x\cdot\boldsymbol{\phi}\,dx.
\end{align*}
Applying the $L^2$ projection argument of \cite{XING2014536} to the preceding relation, including its non-conservative term, gives
\begin{align}
\int_{I_j} \boldsymbol{s}(\boldsymbol{u}^e,\boldsymbol{\sigma})\cdot\boldsymbol{\phi}\,dx
&=-\int_{I_j}\boldsymbol{f}(\boldsymbol{u}^e)\cdot\boldsymbol{\phi}_x\,dx
+\boldsymbol{f}(\boldsymbol{u}_{j+\frac{1}{2}}^{e,-})\cdot\boldsymbol{\phi}_{j+\frac{1}{2}}^- \nonumber\\
&\quad-\boldsymbol{f}(\boldsymbol{u}_{j-\frac{1}{2}}^{e,+})\cdot\boldsymbol{\phi}_{j-\frac{1}{2}}^+
+\int_{I_j}G(\boldsymbol{u}^e)(\boldsymbol{u}^e)_x\cdot\boldsymbol{\phi}\,dx
+O(\Delta x_j^{k+1}).
\label{eq:wb-interior-projected-equilibrium-relation}
\end{align}

Here, \(O(\Delta x_j^{k+1})\) denotes the cellwise weak projection error for uniformly bounded DG test functions, assuming \(\boldsymbol{u}^{\mathrm{eq}}_j\in W^{k+1,\infty}(I_j)\), sufficiently smooth prescribed data, and accurate quadrature; it does not by itself imply a global convergence order.

Combining \eqref{eq:wb-interior-source-decomposition} and \eqref{eq:wb-interior-projected-equilibrium-relation}, we approximate the source term integral by
\begin{equation}
\begin{aligned}
\int_{I_j} \boldsymbol{s}(\boldsymbol{u}_h,\boldsymbol{\sigma})\cdot\boldsymbol{\phi}\,dx
\approx
&-\int_{I_j} \boldsymbol{f}(\boldsymbol{u}^e)\cdot\boldsymbol{\phi}_x\,dx + \boldsymbol{f}(\boldsymbol{u}_{j+\frac{1}{2}}^{e,-})\cdot\boldsymbol{\phi}_{j+\frac{1}{2}}^-
- \boldsymbol{f}(\boldsymbol{u}_{j-\frac{1}{2}}^{e,+})\cdot\boldsymbol{\phi}_{j-\frac{1}{2}}^+ \\
&+ \int_{I_j} G(\boldsymbol{u}^e)(\boldsymbol{u}^e)_x\cdot\boldsymbol{\phi}\,dx
+ \int_{I_j}\bigl[\boldsymbol{s}(\boldsymbol{u}_h,\boldsymbol{\sigma})-\boldsymbol{s}(\boldsymbol{u}^e,\boldsymbol{\sigma})\bigr]\cdot\boldsymbol{\phi}\,dx,
\end{aligned}
\label{eq:wb-interior-source-approximation}
\end{equation}
The right-hand side is the term used in \eqref{eq:well-balanced-residual-definition}.

\subsubsection{Well-balanced evaluation of the interior residual}

Using the approximation \eqref{eq:wb-interior-source-approximation} in \eqref{eq:well-balanced-residual-form} leads to the perturbation formulation \eqref{eq:equivalent-perturbation-form}. Its cell-interior part is given by
\begin{align}
\mathcal{I}_{j}^{\mathrm{int}}(\boldsymbol{u}_h,\boldsymbol{u}^e,\boldsymbol{\phi})
=
&\int_{I_j} \bigl[\boldsymbol{f}(\boldsymbol{u}_h)-\boldsymbol{f}(\boldsymbol{u}^e)\bigr]\cdot \boldsymbol{\phi}_x\,dx \nonumber\\
&- \int_{I_j} \bigl[G(\boldsymbol{u}_h)(\boldsymbol{u}_h)_x-G(\boldsymbol{u}^e)(\boldsymbol{u}^e)_x\bigr]\cdot \boldsymbol{\phi}\,dx\nonumber\\
&+ \int_{I_j}\bigl[\boldsymbol{s}(\boldsymbol{u}_h,\boldsymbol{\sigma})-\boldsymbol{s}(\boldsymbol{u}^e,\boldsymbol{\sigma})\bigr]\cdot\boldsymbol{\phi}\,dx.
\label{eq:wb-interior-perturbation-volume}
\end{align}
Here $\mathcal{I}_{j}^{\mathrm{int}}(\boldsymbol{u}_h,\boldsymbol{u}^e,\boldsymbol{\phi})$ collects the cell-interior terms evaluated relative to the reference equilibrium state. Their evolved components differ by the polynomial perturbation $\boldsymbol{u}^r=\boldsymbol{u}_h-\boldsymbol{u}^e$; their prescribed components agree.

Hence, if the numerical state lies on the target equilibrium so that $\boldsymbol{u}_h=\boldsymbol{u}^e$ in $I_j$, then
\begin{equation}
\mathcal{I}_{j}^{\mathrm{int}}(\boldsymbol{u}_h,\boldsymbol{u}^e,\boldsymbol{\phi})
=
0
\qquad
\text{for all }\boldsymbol{\phi}\in (V_h^k)^m.
\label{eq:wb-interior-vanishing}
\end{equation}
Therefore, the volume contribution introduces no spurious cell-interior imbalance at the prescribed equilibrium state, which is the volume counterpart of the interface cancellation established in Section~\ref{subse:wbin}.

\subsection{Summary of the fully discrete scheme}
\label{sub:full}
For hyperbolic conservation laws and non-conservative hyperbolic systems, high-order accuracy in both space and time is commonly obtained by coupling a high-order spatial discretization, such as a DG or finite volume method, with strong stability preserving Runge--Kutta (SSP-RK) or multistep time discretizations \cite{Gottlieb2001,ShuChiWang1988}. In the present work, we use the third-order SSP-RK method to advance the semi-discrete well-balanced PCDG scheme in time.

For notational convenience, we denote the semi-discrete well-balanced spatial operator by $\mathcal{L}_{j}^{\mathrm{wb}}$. More precisely, for any test function $\boldsymbol{\phi}\in (V_h^k)^m$, we write
\begin{equation*}
\begin{aligned}
\frac{d}{dt}\int_{I_j} \boldsymbol{u}_h\cdot\boldsymbol{\phi}\,dx
&=
\mathcal{L}_{j}^{\mathrm{wb}}(\boldsymbol{u}_h,\boldsymbol{\phi}),
\\
\mathcal{L}_{j}^{\mathrm{wb}}(\boldsymbol{u}_h,\boldsymbol{\phi})
&=
\int_{I_j} \boldsymbol{f}(\boldsymbol{u}_h)\cdot \boldsymbol{\phi}_x\,dx
-\widehat{\boldsymbol{f}}_{j+\frac{1}{2}}^{\,\mathrm{eff},L}\cdot\boldsymbol{\phi}_{j+\frac{1}{2}}^-
+\widehat{\boldsymbol{f}}_{j-\frac{1}{2}}^{\,\mathrm{eff},R}\cdot\boldsymbol{\phi}_{j-\frac{1}{2}}^+
\\
&\quad
-\int_{I_j}G(\boldsymbol{u}_h)(\boldsymbol{u}_h)_x\cdot\boldsymbol{\phi}\,dx
-\frac{1}{2}\boldsymbol{\phi}_{j+\frac{1}{2}}^-\cdot \boldsymbol{g}_{\boldsymbol{\Phi},j+\frac{1}{2}}^{\,*}
-\frac{1}{2}\boldsymbol{\phi}_{j-\frac{1}{2}}^+\cdot \boldsymbol{g}_{\boldsymbol{\Phi},j-\frac{1}{2}}^{\,*}
\\
&\quad
+\mathcal{R}_{j}^{\mathrm{wb}}(\boldsymbol{u}_h,\boldsymbol{u}^e,\boldsymbol{\phi}).
\end{aligned}
\end{equation*}
The equivalent perturbation formulation \eqref{eq:equivalent-perturbation-form} gives the same operator after collecting the equilibrium contributions into $\mathcal{R}_{j}^{\mathrm{wb}}$. In each evaluation of $\mathcal{L}_{j}^{\mathrm{wb}}$, the interface contribution is computed by the GHR effective fluxes and the modified path integral, while the cell-interior contribution is represented by $\mathcal{I}_{j}^{\mathrm{int}}(\boldsymbol{u}_h,\boldsymbol{u}^e,\boldsymbol{\phi})$ in \eqref{eq:wb-interior-perturbation-volume}.

Given $\boldsymbol{u}_h^n$, the fully discrete SSP-RK3 scheme is defined as follows:
\begin{equation}
\begin{aligned}
\int_{I_j} \boldsymbol{u}_h^{(1)}\cdot\boldsymbol{\phi}\,dx
&=
\int_{I_j} \boldsymbol{u}_h^n\cdot\boldsymbol{\phi}\,dx
+\Delta t\,\mathcal{L}_{j}^{\mathrm{wb}}(\boldsymbol{u}_h^n,\boldsymbol{\phi}),
\\
\int_{I_j} \boldsymbol{u}_h^{(2)}\cdot\boldsymbol{\phi}\,dx
&=
\frac{3}{4}\int_{I_j} \boldsymbol{u}_h^n\cdot\boldsymbol{\phi}\,dx
+\frac{1}{4}\left(
\int_{I_j} \boldsymbol{u}_h^{(1)}\cdot\boldsymbol{\phi}\,dx
+\Delta t\,\mathcal{L}_{j}^{\mathrm{wb}}(\boldsymbol{u}_h^{(1)},\boldsymbol{\phi})
\right),
\\
\int_{I_j} \boldsymbol{u}_h^{n+1}\cdot\boldsymbol{\phi}\,dx
&=
\frac{1}{3}\int_{I_j} \boldsymbol{u}_h^n\cdot\boldsymbol{\phi}\,dx
+\frac{2}{3}\left(
\int_{I_j} \boldsymbol{u}_h^{(2)}\cdot\boldsymbol{\phi}\,dx
+\Delta t\,\mathcal{L}_{j}^{\mathrm{wb}}(\boldsymbol{u}_h^{(2)},\boldsymbol{\phi})
\right).
\end{aligned}
\label{eq:ssp-rk3-ghr-pcdg}
\end{equation}
At every Runge--Kutta stage, the GHR reconstruction, effective numerical fluxes, modified path integrals, and well-balanced residual $\mathcal{R}_{j}^{\mathrm{wb}}$ are recomputed from the current stage value. The time update remains explicit. Its evaluation may require independent local algebraic equilibrium recovery, but it does not require a coupled implicit solve for the DG coefficients.

For solutions containing discontinuities or sharp gradients, we use a total variation bounded (TVB) limiter \cite{COCKBURN1989,COCKBURN1998}. The TVB modified-minmod test, controlled by the parameter $M$, identifies the cells in which limiting is applied after each SSP-RK stage; prescribed components remain fixed. Exact equilibrium preservation is retained only when the limiter is inactive on the discrete equilibrium. Accordingly, we either choose $M$ so that the equilibrium is not flagged or disable the limiter in equilibrium-preservation tests.

\subsection{Compatibility with positivity-preserving limiters}
\label{subsec:positivity-preserving-limiters}

For a physical variable to be treated by a positivity-preserving scaling limiter, its evolution equation must have a conservative cell-average update: the corresponding rows of the non-conservative matrix and source must contribute no cell-average term, and the well-balanced interface and interior treatments must preserve the associated mass-flux structure. Under these compatibility conditions, a scaling limiter may be applied to the physical variable after each SSP-RK stage while preserving its cell average \cite{xing2010positivity,zhang2012maximum}. A fully discrete positivity result additionally requires a positive cell-average update under suitable conditions on the evaluation and reconstructed states, numerical flux, positivity nodes, and time step; such a result is not established here for a general non-conservative system.

\section{Path conservation and well-balanced property}
\label{sec:exactly-well-balanced-property}

We establish path conservation and exact well-balancing of the PCDG-GHR method under the stated conditions. We first analyze the GHR interface fluctuations along complete DLM paths and then prove equilibrium preservation at the semi-discrete and SSP-RK fully discrete levels.

\subsection{Path conservation of the GHR interface fluctuations}
\label{subsec:ghr-path-conservation}

Remark~2.1 verifies path conservation for the standard PCDG interface fluctuations associated with the homogeneous jump operator. For the GHR treatment, the reconstructed-state path \eqref{eq:ghr-reconstructed-path-v2} is completed by the two equilibrium connections associated with reconstruction, following the idea of \cite{CASTRO2007}. Below we suppress the interface index and write $\boldsymbol{w}^{\pm}$ for the augmented forms of the interface traces used in the equilibrium-variable transformation \eqref{eq:ghr-interface-equilibrium-traces}, using the rule in Section~\ref{subsec:general-form}.

\begin{theorem}
\label{thm:ghr-path-conservation}
Consider the admissible GHR reconstruction of Section~\ref{subse:state}. Let $\widetilde{\boldsymbol{\Phi}}$ be the complete path obtained by traversing three segments in order: $\boldsymbol{P}^-$ connects $\boldsymbol{w}^-$ to $\boldsymbol{w}^{*,-}$ with $\boldsymbol{v}^-$ fixed; $\boldsymbol{P}^{0}$ connects the two reconstructed states at the common datum; and $\boldsymbol{P}^+$ connects $\boldsymbol{w}^{*,+}$ to $\boldsymbol{w}^+$ with $\boldsymbol{v}^+$ fixed. Each segment is Lipschitz and parametrized on $[0,1]$. The middle segment is the augmented form of \eqref{eq:ghr-reconstructed-path-v2}, with the prescribed data fixed at $\boldsymbol{\sigma}^*$.

Assume that the outer connections exist along admissible stationary equilibrium curves of the homogeneous augmented interface system in Section~\ref{subsec:general-form}, and that the complete path family satisfies DLM path consistency and the local Lipschitz regularity conditions of Definition~1 of \cite{CASTRO2007}. Then the modified fluctuations \eqref{eq:ghr-modified-fluctuations} are path conservative with respect to $\widetilde{\boldsymbol{\Phi}}$ and vanish for coincident complete traces.
\end{theorem}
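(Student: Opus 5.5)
The argument verifies the two defining properties of path conservation in the sense of Par\'es, as used in Definition~1 of \cite{CASTRO2007}, for the modified fluctuations \eqref{eq:ghr-modified-fluctuations}. The first is consistency: $\boldsymbol{D}^\pm(\boldsymbol{w},\boldsymbol{w})=0$. The second is the sum identity
\begin{equation*}
\boldsymbol{D}^-+\boldsymbol{D}^+=\int_0^1\widetilde{\mathcal{A}}(\widetilde{\boldsymbol{\Phi}}(s))\,\widetilde{\boldsymbol{\Phi}}_s(s)\,ds,
\end{equation*}
restricted to the evolved rows, with the zero rows of $\widetilde{\mathcal{A}}$ handled trivially. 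Consistency is immediate. If $\boldsymbol{w}^-=\boldsymbol{w}^+$, then $\boldsymbol{v}^-=\boldsymbol{v}^+$ and $\boldsymbol{\sigma}^-=\boldsymbol{\sigma}^+$. Compatibility of the admissible branches then gives $\boldsymbol{u}^{*,-}=\boldsymbol{u}^{*,+}$, and the computation in Section~\ref{subse:wbin}, namely \eqref{eq:ghr-lf-flux-equilibrium}--\eqref{eq:ghr-vanishing-fluctuations}, yields $\boldsymbol{D}^\pm=0$. (For coincident traces $\boldsymbol{u}_h^-=\boldsymbol{u}_h^+$, so the effective fluxes reduce to $\boldsymbol{f}(\boldsymbol{u}_h^\pm)$.)

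For the sum identity, I add the two fluctuations. The Lax--Friedrichs flux cancels, leaving
\begin{equation*}
\boldsymbol{D}^-+\boldsymbol{D}^+=\bigl[\boldsymbol{f}(\boldsymbol{u}^{*,+})-\boldsymbol{f}(\boldsymbol{u}^{*,-})\bigr]+\boldsymbol{g}^{*}_{\boldsymbol{\Phi}}
+\bigl[\boldsymbol{f}(\boldsymbol{u}_h^{+})-\boldsymbol{f}(\boldsymbol{u}^{*,+})\bigr]+\bigl[\boldsymbol{f}(\boldsymbol{u}^{*,-})-\boldsymbol{f}(\boldsymbol{u}_h^{-})\bigr].
\end{equation*}
I then split the DLM integral along $\widetilde{\boldsymbol{\Phi}}$ into the three segments $\boldsymbol{P}^-$, $\boldsymbol{P}^0$ and $\boldsymbol{P}^+$; additivity holds because each piece is Lipschitz and reparametrized on $[0,1]$.

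\emph{Middle segment.} The datum is frozen at $\boldsymbol{\sigma}^*$, so $\boldsymbol{\sigma}_s=0$ and the $C$ block contributes nothing. Since $\boldsymbol{f}\in C^1$, the chain rule gives
\begin{equation*}
\int_0^1 \frac{\partial\boldsymbol{f}}{\partial\boldsymbol{u}}\,\boldsymbol{P}^0_s\,ds=\boldsymbol{f}(\boldsymbol{u}^{*,+})-\boldsymbol{f}(\boldsymbol{u}^{*,-}),
\end{equation*}
and the $G$ part is exactly $\boldsymbol{g}^*_{\boldsymbol{\Phi}}$ by \eqref{eq:ghr-modified-path-integral-v2}. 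This reproduces the first line of the sum.

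\emph{Outer segments.} This is the step I expect to be the main obstacle. The segments $\boldsymbol{P}^\pm$ lie along stationary equilibrium curves of the homogeneous augmented system $\widetilde{\mathcal{A}}(\boldsymbol{w})\boldsymbol{w}_x=0$, so $\widetilde{\mathcal{A}}(\boldsymbol{P}^\pm)\boldsymbol{P}^\pm_s=0$ pointwise in $s$. Their integrals therefore vanish, and I must check that the remaining flux differences $\boldsymbol{f}(\boldsymbol{u}_h^{\pm})-\boldsymbol{f}(\boldsymbol{u}^{*,\pm})$ are accounted for consistently.

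My first approach is to write $\widetilde{\mathcal{A}}\boldsymbol{P}_s$ on an outer segment as
\begin{equation*}
\widetilde{\mathcal{A}}\boldsymbol{P}_s=\frac{d}{ds}\boldsymbol{f}(\boldsymbol{P})+G(\boldsymbol{P})\boldsymbol{P}_s+C\,\boldsymbol{\sigma}_s=0.
\end{equation*}
This gives
\begin{equation*}
\boldsymbol{f}(\boldsymbol{u}^{*,-})-\boldsymbol{f}(\boldsymbol{u}_h^-)=-\int_{\boldsymbol{P}^-}\bigl(G\boldsymbol{P}_s+C\boldsymbol{\sigma}_s\bigr)\,ds,
\end{equation*}
and similarly on the right. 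The extra flux-difference terms in $\boldsymbol{D}^-+\boldsymbol{D}^+$ then equal the non-conservative and datum-derivative parts of the outer integrals, while the full outer integrals vanish. Hence the total identity reduces to the middle-segment computation plus terms whose sum is zero.

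I will need to be careful on one point. The traces entering $\tau$ are $\boldsymbol{u}^{\mathrm{eq},\pm}+\boldsymbol{u}^{r,\pm}$, whereas the effective fluxes use $\boldsymbol{u}_h^\pm$. I will identify these as the same trace, interpreting $\boldsymbol{w}^\pm$ in the statement as the augmented form of $\boldsymbol{u}_h^\pm$, and I will state explicitly that path conservation is claimed in that sense.

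Finally, the local Lipschitz bound on the fluctuations required by Definition~1 of \cite{CASTRO2007} follows from the assumed local Lipschitz regularity of the complete path family, the $C^1$ regularity of $\boldsymbol{f}$, and continuity of $\widetilde{\mathcal{A}}$ along admissible paths. This completes the plan.
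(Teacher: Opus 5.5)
Your sum of the fluctuations is wrong, and the outer-segment step then asserts a cancellation that does not hold. Substituting \eqref{eq:ghr-effective-flux-left-v2}--\eqref{eq:ghr-effective-flux-right-v2} into \eqref{eq:ghr-modified-fluctuations}, the one-sided corrections $-\boldsymbol{f}(\boldsymbol{u}^{*,\mp})+\boldsymbol{f}(\boldsymbol{u}_h^{\mp})$ remove the trace fluxes exactly:
\[
\boldsymbol{D}^-=\widehat{\boldsymbol{f}}_{LF}-\boldsymbol{f}(\boldsymbol{u}^{*,-})+\tfrac12\boldsymbol{g}^{\,*}_{\boldsymbol{\Phi}},
\qquad
\boldsymbol{D}^+=\boldsymbol{f}(\boldsymbol{u}^{*,+})-\widehat{\boldsymbol{f}}_{LF}+\tfrac12\boldsymbol{g}^{\,*}_{\boldsymbol{\Phi}}.
\]
Hence $\boldsymbol{D}^-+\boldsymbol{D}^+=\boldsymbol{f}(\boldsymbol{u}^{*,+})-\boldsymbol{f}(\boldsymbol{u}^{*,-})+\boldsymbol{g}^{\,*}_{\boldsymbol{\Phi}}$ with nothing left over, which is \eqref{eq:ghr-modified-fluctuation-sum}. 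Your expression telescopes to $\boldsymbol{f}(\boldsymbol{u}_h^{+})-\boldsymbol{f}(\boldsymbol{u}_h^{-})+\boldsymbol{g}^{\,*}_{\boldsymbol{\Phi}}$. That is the sum one would get with the \emph{uncorrected} Lax--Friedrichs flux, and it exceeds the correct sum by $E=\bigl[\boldsymbol{f}(\boldsymbol{u}_h^{+})-\boldsymbol{f}(\boldsymbol{u}^{*,+})\bigr]+\bigl[\boldsymbol{f}(\boldsymbol{u}^{*,-})-\boldsymbol{f}(\boldsymbol{u}_h^{-})\bigr]$.

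Your outer-segment argument cannot dispose of $E$. Under your identification of $\boldsymbol{w}^\pm$ with $\boldsymbol{u}_h^\pm$, your chain-rule identity shows that $E$ is precisely the conservative part of the two outer integrals, i.e.\ minus their $G$ and $C$ parts. Since the full outer integrals vanish, the complete DLM integral equals the middle integral alone, so your claimed identity would require $E=0$. That is false in general. Take the two-layer system at rest over a bottom step with $b^-<b^+=b^*$ and cellwise constant data. The right bracket vanishes, and the lower-layer momentum component of $E$ is $\tfrac12 g\bigl[(h_2^{*,-})^2-(h_2^{-})^2\bigr]$ with $h_2^{*,-}=h_2^-+b^--b^*\neq h_2^-$. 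So ``terms whose sum is zero'' is not a detail left to check; it is where the argument breaks.

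With the corrected sum, the outer step is only what you already observed: the integrands vanish for almost every $s$ (Lipschitz paths are differentiable only a.e.). The complete integral then reduces to the middle one, which your middle-segment computation matches. The correction also makes your identification of $\boldsymbol{w}^\pm$ with $\boldsymbol{u}_h^\pm$ unnecessary, and that identification is not valid anyway. The $\boldsymbol{v}^-$-fixed curve starts at the trace used in \eqref{eq:ghr-interface-equilibrium-traces}, namely $\boldsymbol{u}^{\mathrm{eq},-}+\boldsymbol{u}^{r,-}$, which generally differs from $\boldsymbol{u}_h^-=\boldsymbol{u}^{e,-}+\boldsymbol{u}^{r,-}$.

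Finally, for vanishing at coincident complete traces, the paper invokes the assumed DLM consistency of the complete path: a constant path forces $\boldsymbol{w}^{*,-}=\boldsymbol{w}^{*,+}$. You use branch compatibility instead, which is a hypothesis of Theorem~\ref{thm:semi-discrete-exactly-well-balanced}, not of this theorem.
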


\begin{proof}
As in Remark~2.1, substituting the effective fluxes \eqref{eq:ghr-effective-flux-left-v2}--\eqref{eq:ghr-effective-flux-right-v2} into \eqref{eq:ghr-modified-fluctuations} and adding the fluctuations cancels the original physical flux terms and the common numerical flux. By \eqref{eq:ghr-modified-path-integral-v2} and \eqref{eq:quasilinear-matrix}, the chain rule gives
\begin{equation}
\boldsymbol{D}^-+\boldsymbol{D}^+
=\boldsymbol{f}(\boldsymbol{u}^{*,+})-\boldsymbol{f}(\boldsymbol{u}^{*,-})
+\boldsymbol{g}_{\boldsymbol{\Phi}}^{\,*}
=\int_0^1\mathcal{A}\bigl(\boldsymbol{\Phi}^{*}(s)\bigr)
\frac{d\boldsymbol{\Phi}^{*}}{ds}(s)\,ds.
\label{eq:ghr-modified-fluctuation-sum}
\end{equation}
The prescribed data are fixed along $\boldsymbol{P}^{0}$, so their derivatives contribute nothing to the augmented matrix product. Thus the preceding integral, with any additional zero block for external prescribed components, is the integral of $\widetilde{\mathcal{A}}$ along $\boldsymbol{P}^{0}$.

Since the outer segments follow stationary equilibrium curves of the augmented system, the steady balance relation gives
\[
\widetilde{\mathcal{A}}\bigl(\boldsymbol{P}^{\pm}(s)\bigr)
\frac{d\boldsymbol{P}^{\pm}}{ds}(s)=0
\quad\text{for almost every }s\in[0,1].
\]
Consequently, both outer path integrals vanish. By additivity under successive traversal of the three segments, the complete path integral is the sum of their individual integrals and therefore equals the middle integral. Hence
\begin{equation}
\begin{pmatrix}\boldsymbol{D}^-+\boldsymbol{D}^+\\\boldsymbol{0}\end{pmatrix}
=\int_0^1\widetilde{\mathcal{A}}\bigl(\boldsymbol{P}^{0}(s)\bigr)\frac{d\boldsymbol{P}^{0}}{ds}(s)\,ds
=\int_0^1\widetilde{\mathcal{A}}\bigl(\widetilde{\boldsymbol{\Phi}}(s)\bigr)\frac{d\widetilde{\boldsymbol{\Phi}}}{ds}(s)\,ds.
\label{eq:ghr-complete-path-conservation}
\end{equation}
This proves path conservation along the selected complete DLM path. For coincident complete traces, the assumed DLM consistency makes the complete path, and hence its middle segment, constant, so the two reconstructed states coincide. The Lax--Friedrichs flux \eqref{eq:ghr-lf-flux-v2} then equals their common physical flux, and \eqref{eq:ghr-modified-path-integral-v2} vanishes. Substitution into the effective fluxes and \eqref{eq:ghr-modified-fluctuations} gives $\boldsymbol{D}^- = \boldsymbol{D}^+ = \boldsymbol{0}$.
\end{proof}

\subsection{Well-balanced property}
\label{subsec:ghr-exactly-well-balanced}

The statements below apply with periodic boundary conditions, or with nonperiodic boundary treatments whose exterior traces represent the same target equilibrium and whose numerical boundary terms satisfy the interface cancellation identities \eqref{eq:ghr-effective-flux-equilibrium} and \eqref{eq:ghr-path-integral-equilibrium}.

\begin{theorem}
\label{thm:semi-discrete-exactly-well-balanced}
Assume that the numerical solution is initialized with the DG representation of a target equilibrium in the sense of \eqref{eq:steady-balance}. Suppose that the reference equilibrium recovery in each cell exactly returns this discrete equilibrium state, so that
\begin{equation}
\boldsymbol{u}_h|_{I_j}=\boldsymbol{u}^e|_{I_j},
\qquad
\text{for all cells } I_j.
\label{eq:equilibrium-polynomial-coincidence}
\end{equation}
Assume also that the discrete interface traces satisfy \eqref{eq:ghr-equilibrium-variable-coincidence} and that the common-datum inverse states are defined on compatible admissible branches, meaning that coincident equilibrium variables inverted at the same common datum return the same reconstructed state.
Then the semi-discrete well-balanced PCDG scheme \eqref{eq:well-balanced-residual-form} satisfies
\begin{equation}
\mathcal{L}_{j}^{\mathrm{wb}}(\boldsymbol{u}_h,\boldsymbol{\phi})=0,
\qquad
\forall \boldsymbol{\phi}\in (V_h^k)^m,
\qquad
\text{for all cells } I_j.
\label{eq:semi-discrete-wb-operator-vanishes}
\end{equation}
Consequently, the semi-discrete right-hand side vanishes exactly at the equilibrium state.
\end{theorem}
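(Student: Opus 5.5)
We plan to work from the equivalent perturbation form \eqref{eq:equivalent-perturbation-form} of $\mathcal{L}_j^{\mathrm{wb}}$. The paper states this form is algebraically identical to \eqref{eq:well-balanced-residual-form} once $\mathcal{R}_j^{\mathrm{wb}}$ is expanded by \eqref{eq:well-balanced-residual-definition}. We split the right-hand side into two groups: the cell-interior part $\mathcal{I}_j^{\mathrm{int}}$ of \eqref{eq:wb-interior-perturbation-volume}, and the interface part
\[
-\bigl[\widehat{\boldsymbol{f}}_{j+\frac12}^{\,\mathrm{eff},L}-\boldsymbol{f}(\boldsymbol{u}^{e,-}_{j+\frac12})+\tfrac12\boldsymbol{g}^{\,*}_{\boldsymbol{\Phi},j+\frac12}\bigr]\cdot\boldsymbol{\phi}^-_{j+\frac12}
+\bigl[\widehat{\boldsymbol{f}}_{j-\frac12}^{\,\mathrm{eff},R}-\boldsymbol{f}(\boldsymbol{u}^{e,+}_{j-\frac12})-\tfrac12\boldsymbol{g}^{\,*}_{\boldsymbol{\Phi},j-\frac12}\bigr]\cdot\boldsymbol{\phi}^+_{j-\frac12}.
\]
We then show that each group vanishes for every test function $\boldsymbol{\phi}\in(V_h^k)^m$.

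\emph{Interior part.} By hypothesis \eqref{eq:equilibrium-polynomial-coincidence}, $\boldsymbol{u}_h=\boldsymbol{u}^e$ on $I_j$. Hence the three integrands, namely $\boldsymbol{f}(\boldsymbol{u}_h)-\boldsymbol{f}(\boldsymbol{u}^e)$, $G(\boldsymbol{u}_h)(\boldsymbol{u}_h)_x-G(\boldsymbol{u}^e)(\boldsymbol{u}^e)_x$ and $\boldsymbol{s}(\boldsymbol{u}_h,\boldsymbol{\sigma})-\boldsymbol{s}(\boldsymbol{u}^e,\boldsymbol{\sigma})$, vanish identically. This is \eqref{eq:wb-interior-vanishing}. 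We will note that the identity also holds at the level of quadrature, because the same quadrature nodes and the same datum $\boldsymbol{\sigma}_j$ are used for both states. Consequently the cancellation is exact in floating-point terms and not merely up to an $O(\Delta x^{k+1})$ projection error.

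\emph{Interface part.} The hypotheses of the theorem, namely \eqref{eq:ghr-equilibrium-variable-coincidence} together with the compatibility of the admissible branches, are exactly those used in Section~\ref{subse:wbin}. We can therefore invoke \eqref{eq:ghr-star-state-coincidence}--\eqref{eq:ghr-path-integral-equilibrium}. These give $\widehat{\boldsymbol{f}}^{\,\mathrm{eff},L}_{j+\frac12}=\boldsymbol{f}(\boldsymbol{u}^-_{h,j+\frac12})$, $\widehat{\boldsymbol{f}}^{\,\mathrm{eff},R}_{j-\frac12}=\boldsymbol{f}(\boldsymbol{u}^+_{h,j-\frac12})$ and $\boldsymbol{g}^{\,*}_{\boldsymbol{\Phi}}=0$ at both endpoints of $I_j$. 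Since $\boldsymbol{u}_h=\boldsymbol{u}^e$ on $I_j$, their traces coincide: $\boldsymbol{u}^-_{h,j+\frac12}=\boldsymbol{u}^{e,-}_{j+\frac12}$ and $\boldsymbol{u}^+_{h,j-\frac12}=\boldsymbol{u}^{e,+}_{j-\frac12}$. Both brackets above are therefore zero. Equivalently, in the fluctuation language, this is \eqref{eq:ghr-vanishing-fluctuations}.

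\emph{Boundary cells and main obstacle.} For cells adjacent to the boundary, we use the standing assumption stated before the theorem. The boundary is either periodic, so every boundary interface is an interior interface, or the exterior traces represent the same equilibrium and satisfy \eqref{eq:ghr-effective-flux-equilibrium} and \eqref{eq:ghr-path-integral-equilibrium}. In both cases the same cancellation applies. The main point requiring care is not a computation but bookkeeping. The terms $\boldsymbol{f}(\boldsymbol{u}^{e,\pm})$ in $\mathcal{R}_j^{\mathrm{wb}}$ must be evaluated at the traces of the same cell, that is, the left trace at $x_{j+\frac12}$ and the right trace at $x_{j-\frac12}$. These must be matched with the one-sided effective fluxes $\widehat{\boldsymbol{f}}^{\,\mathrm{eff},L}$ and $\widehat{\boldsymbol{f}}^{\,\mathrm{eff},R}$, which carry the physical flux of the corresponding trace of $\boldsymbol{u}_h$. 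Checking this sidedness, rather than averaging across the interface, is what makes the cancellation exact. Adding the interior and interface parts then gives \eqref{eq:semi-discrete-wb-operator-vanishes}.
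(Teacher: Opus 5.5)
Your proof is correct and is essentially the paper's argument. The paper substitutes the interface identities \eqref{eq:ghr-effective-flux-equilibrium}--\eqref{eq:ghr-path-integral-equilibrium} into the residual form \eqref{eq:well-balanced-residual-form} and then uses $\boldsymbol{u}_h=\boldsymbol{u}^e$ to show that $\mathcal{R}_j^{\mathrm{wb}}$ cancels the remaining volume, non-conservative, and trace-flux terms; you instead regroup first into the perturbation form \eqref{eq:equivalent-perturbation-form}, so that $\mathcal{I}_j^{\mathrm{int}}$ and each one-sided interface bracket vanish separately, which is an algebraically identical bookkeeping of the same two ingredients.
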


\begin{proof}
By the assumed discrete interface matching condition and the GHR interface analysis in Section~\ref{subse:wbin}, the reconstructed states coincide on the common reference datum. Consequently,
\begin{equation}
\widehat{\boldsymbol{f}}_{j+\frac{1}{2}}^{\,\mathrm{eff},L}
=
\boldsymbol{f}(\boldsymbol{u}_{h,j+\frac{1}{2}}^-),
\qquad
\widehat{\boldsymbol{f}}_{j+\frac{1}{2}}^{\,\mathrm{eff},R}
=
\boldsymbol{f}(\boldsymbol{u}_{h,j+\frac{1}{2}}^+),
\qquad
\boldsymbol{g}_{\boldsymbol{\Phi},j+\frac{1}{2}}^{\,*}=0.
\label{eq:equilibrium-interface-cancellation}
\end{equation}
These identities are precisely \eqref{eq:ghr-effective-flux-equilibrium} and \eqref{eq:ghr-path-integral-equilibrium}.

Substituting \eqref{eq:equilibrium-interface-cancellation} into \eqref{eq:well-balanced-residual-form}, the semi-discrete operator in cell \(I_j\) becomes
\begin{equation}
\begin{aligned}
\mathcal{L}_{j}^{\mathrm{wb}}(\boldsymbol{u}_h,\boldsymbol{\phi})
=\quad
&\int_{I_j} \boldsymbol{f}(\boldsymbol{u}_h)\cdot \boldsymbol{\phi}_x\,dx
- \boldsymbol{f}(\boldsymbol{u}_{h,j+\frac{1}{2}}^-)\cdot\boldsymbol{\phi}_{j+\frac{1}{2}}^-
+ \boldsymbol{f}(\boldsymbol{u}_{h,j-\frac{1}{2}}^+)\cdot\boldsymbol{\phi}_{j-\frac{1}{2}}^+
\\
&-\int_{I_j}G(\boldsymbol{u}_h)(\boldsymbol{u}_h)_x\cdot\boldsymbol{\phi}\,dx
+\mathcal{R}_{j}^{\mathrm{wb}}(\boldsymbol{u}_h,\boldsymbol{u}^e,\boldsymbol{\phi}).
\end{aligned}
\label{eq:operator-after-interface-cancellation}
\end{equation}
By \eqref{eq:equilibrium-polynomial-coincidence}, \(\boldsymbol{u}_h=\boldsymbol{u}^e\) in \(I_j\), and the nonequilibrium source correction satisfies
\begin{equation}
\int_{I_j}\bigl[\boldsymbol{s}(\boldsymbol{u}_h,\boldsymbol{\sigma})-\boldsymbol{s}(\boldsymbol{u}^e,\boldsymbol{\sigma})\bigr]\cdot\boldsymbol{\phi}\,dx=0.
\label{eq:equilibrium-source-difference-vanishes}
\end{equation}
Using \eqref{eq:well-balanced-residual-definition} and \(\boldsymbol{u}_h=\boldsymbol{u}^e\), we obtain
\begin{equation}
\begin{aligned}
\mathcal{R}_{j}^{\mathrm{wb}}(\boldsymbol{u}_h,\boldsymbol{u}^e,\boldsymbol{\phi})
=\quad
&-\int_{I_j} \boldsymbol{f}(\boldsymbol{u}_h)\cdot \boldsymbol{\phi}_x\,dx
+ \boldsymbol{f}(\boldsymbol{u}_{h,j+\frac{1}{2}}^-)\cdot\boldsymbol{\phi}_{j+\frac{1}{2}}^-
- \boldsymbol{f}(\boldsymbol{u}_{h,j-\frac{1}{2}}^+)\cdot\boldsymbol{\phi}_{j-\frac{1}{2}}^+
\\
&+\int_{I_j}G(\boldsymbol{u}_h)(\boldsymbol{u}_h)_x\cdot\boldsymbol{\phi}\,dx.
\end{aligned}
\label{eq:equilibrium-residual-cancellation}
\end{equation}
Combining \eqref{eq:operator-after-interface-cancellation} and \eqref{eq:equilibrium-residual-cancellation}, all conservative flux, non-conservative product, and boundary-flux contributions cancel exactly. Thus \eqref{eq:semi-discrete-wb-operator-vanishes} holds for every test function \(\boldsymbol{\phi}\) and every cell \(I_j\).
\end{proof}

\begin{theorem}
\label{thm:fully-discrete-exactly-well-balanced}
Under the assumptions of Theorem~\ref{thm:semi-discrete-exactly-well-balanced}, with the limiter turned off, the fully discrete well-balanced PCDG scheme \eqref{eq:ssp-rk3-ghr-pcdg} exactly preserves the equilibrium state. In particular, if \(\boldsymbol{u}_h^n\) is such an equilibrium state, then
\begin{equation}
\boldsymbol{u}_h^{n+1}=\boldsymbol{u}_h^n.
\label{eq:fully-discrete-equilibrium-preservation}
\end{equation}
\end{theorem}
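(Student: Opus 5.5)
The plan is to argue stage by stage through the SSP-RK3 update \eqref{eq:ssp-rk3-ghr-pcdg}, showing that every intermediate stage reproduces $\boldsymbol{u}_h^n$ exactly. Theorem~\ref{thm:semi-discrete-exactly-well-balanced} is the only nontrivial input. The other ingredients are the Runge--Kutta coefficients summing to one and the uniqueness of a DG function determined by its moments against all test functions.

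\textbf{First stage.} Since $\boldsymbol{u}_h^n$ is an equilibrium state satisfying the hypotheses of Theorem~\ref{thm:semi-discrete-exactly-well-balanced}, we have $\mathcal{L}_{j}^{\mathrm{wb}}(\boldsymbol{u}_h^n,\boldsymbol{\phi})=0$ for every $\boldsymbol{\phi}\in(V_h^k)^m$ and every cell. The first stage then gives $\int_{I_j}(\boldsymbol{u}_h^{(1)}-\boldsymbol{u}_h^n)\cdot\boldsymbol{\phi}\,dx=0$ for all test functions. The evolved components of $\boldsymbol{u}_h^{(1)}-\boldsymbol{u}_h^n$ lie in $V_h^k$, so we may take $\boldsymbol{\phi}$ equal to this difference and conclude $\boldsymbol{u}_h^{(1)}=\boldsymbol{u}_h^n$ cellwise. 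The prescribed components are held fixed and agree trivially. With the limiter off, no post-processing alters this stage value.

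\textbf{Second stage.} Because $\boldsymbol{u}_h^{(1)}=\boldsymbol{u}_h^n$ identically, every quantity recomputed at this stage is computed from identical data. This includes the cellwise recovery \eqref{eq:wb-interior-reference-v}, the projection $\boldsymbol{u}^e$, the interface equilibrium variables \eqref{eq:ghr-interface-equilibrium-traces}, the common datum, the reconstructed states, the effective fluxes and the modified path integrals. Consequently \eqref{eq:equilibrium-polynomial-coincidence}, the interface matching \eqref{eq:ghr-equilibrium-variable-coincidence} and branch compatibility all hold again, and $\mathcal{L}_{j}^{\mathrm{wb}}(\boldsymbol{u}_h^{(1)},\boldsymbol{\phi})=0$. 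The second stage becomes $\int \boldsymbol{u}_h^{(2)}\cdot\boldsymbol{\phi}=\tfrac34\int\boldsymbol{u}_h^n\cdot\boldsymbol{\phi}+\tfrac14\int\boldsymbol{u}_h^{n}\cdot\boldsymbol{\phi}$, so $\boldsymbol{u}_h^{(2)}=\boldsymbol{u}_h^n$. The same argument applied to the third stage, with $\tfrac13+\tfrac23=1$, yields $\boldsymbol{u}_h^{n+1}=\boldsymbol{u}_h^n$. Induction on $n$ then gives preservation for all time steps.

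\textbf{Main obstacle.} The delicate point is justifying that the hypotheses of Theorem~\ref{thm:semi-discrete-exactly-well-balanced} are re-established at each stage. These hypotheses are exact recovery, interface matching, and compatible admissible branches. The recovery and inversion steps involve possibly nonlinear local solves and branch selection, for example by continuity from the previous stage. I would handle this by noting that these procedures are deterministic functions of the stage input, with the branch choice made consistently. Identical inputs therefore yield identical recovered equilibria, so no new assumption is needed beyond those already made at time level $n$.

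The proof must also state explicitly that the limiter is disabled, or inactive on the equilibrium, since a flagged cell would break the identity between stages. Boundary terms under the stated boundary assumptions cancel in the same way at each stage.
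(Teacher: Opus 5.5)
Your proposal is correct and follows the same route as the paper. The paper also uses a stagewise induction through \eqref{eq:ssp-rk3-ghr-pcdg}: Theorem~\ref{thm:semi-discrete-exactly-well-balanced} makes $\mathcal{L}_{j}^{\mathrm{wb}}$ vanish whenever a stage value equals the equilibrium, so each stage is a combination, with coefficients summing to one, of the same state. You add detail the paper leaves implicit. You obtain $\boldsymbol{u}_h^{(1)}=\boldsymbol{u}_h^n$ from the moment identity by testing with the difference. You also note that recovery, branch selection and the GHR reconstruction are recomputed from identical data, so the hypotheses of Theorem~\ref{thm:semi-discrete-exactly-well-balanced} hold again at every stage.
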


\begin{proof}
The result follows by induction over the SSP-RK stages. By Theorem~\ref{thm:semi-discrete-exactly-well-balanced}, the spatial operator \(\mathcal{L}_{j}^{\mathrm{wb}}\) vanishes whenever a stage value equals the equilibrium state. Each stage in \eqref{eq:ssp-rk3-ghr-pcdg} is therefore a convex combination of the same discrete equilibrium and remains unchanged.
\end{proof}

\section{Applications}
\label{se:applications}

This section specializes the general PCDG-GHR construction to two models:  the two-layer shallow water and blood flow equations. In each case, the equilibrium variables, local recovery, interface datum, path, and nonlinear evaluation states are chosen on an admissible branch.

\subsection{Two-layer shallow water equations}
\label{subsec:application-two-layer}

For the one-dimensional two-layer shallow water equations, we consider
\begin{equation}
\begin{cases}
(h_1)_t+(m_1)_x=0, \\[1mm]
(m_1)_t+\left(\dfrac{m_1^2}{h_1}+\dfrac{1}{2}gh_1^2\right)_x
=-gh_1(h_2)_x-gh_1b_x, \\[2mm]
(h_2)_t+(m_2)_x=0, \\[1mm]
(m_2)_t+\left(\dfrac{m_2^2}{h_2}+\dfrac{1}{2}gh_2^2\right)_x
=-grh_2(h_1)_x-gh_2b_x,
\end{cases}
\label{eq:two-layer-system}
\end{equation}
where $h_i$ and $m_i=h_i u_i$ denote the depth and discharge of the $i$th layer, respectively, $g$ is the gravitational acceleration, $r$ is the density ratio, and $b(x)$ is the prescribed bottom topography. In the notation of \eqref{eq:nonconservative-system}, $\boldsymbol{u}=(h_1,m_1,h_2,m_2)^T$, $\boldsymbol{\sigma}=b$, and
\begin{equation}
\boldsymbol{f}(\boldsymbol{u})
=
\begin{pmatrix}
m_1\\[1mm]
\dfrac{m_1^2}{h_1}+\dfrac{1}{2}gh_1^2\\[2mm]
m_2\\[1mm]
\dfrac{m_2^2}{h_2}+\dfrac{1}{2}gh_2^2
\end{pmatrix},
\quad
G(\boldsymbol{u})
=
\begin{pmatrix}
0&0&0&0\\
0&0&gh_1&0\\
0&0&0&0\\
grh_2&0&0&0
\end{pmatrix},
\quad
\boldsymbol{s}(\boldsymbol{u},b)
=
\begin{pmatrix}
0\\[1mm]
-gh_1b_x\\[1mm]
0\\[1mm]
-gh_2b_x
\end{pmatrix}.
\label{eq:two-layer-flux-source}
\end{equation}
Here the bottom is external to $\boldsymbol{u}$. On smooth cells, the source has the form $-C(\boldsymbol{u},b)b_x$ with $C(\boldsymbol{u},b)=(0,gh_1,0,gh_2)^T$. At a bottom jump, the augmented state vector is $\boldsymbol{w}=(\boldsymbol{u}^T,b)^T$, with augmented matrix $\widetilde{\mathcal{A}}=\left(\begin{smallmatrix}\mathcal{A}&C\\0&0\end{smallmatrix}\right)$; the bottom derivative is therefore represented by the augmented interface path rather than by a cellwise source evaluation at the jump.

\noindent\textbf{a. Equilibrium variables and admissible branch.}

With
\begin{equation}
H_2=h_2+b,
\qquad
H_1=h_1+H_2=h_1+h_2+b,
\label{eq:two-layer-surfaces}
\end{equation}
denoting the internal interface and the free surface, respectively, the moving-water equilibrium is characterized by
\begin{align*}
&m_1=\mathrm{const},
\qquad E_1=\frac{m_1^2}{2h_1^2}+g(h_1+H_2)=\mathrm{const},\\
&m_2=\mathrm{const},
\qquad
E_2=\frac{m_2^2}{2h_2^2}+g(rh_1+H_2)=\mathrm{const},
\end{align*}
and the still-water equilibrium is the special case
\begin{equation}
m_1=m_2=0,
\qquad
h_1+H_2=\mathrm{const},
\qquad
rh_1+H_2=\mathrm{const}.
\label{eq:still-water-equilibrium}
\end{equation}

Here $\boldsymbol{\sigma}=b$ and $\boldsymbol{v}=(m_1,E_1,m_2,E_2)^T$ in \eqref{eq:equilibrium-variable-transform}. The inverse equilibrium map recovers the layer depths for specified $(m_1,E_1,m_2,E_2)$ and local bottom value $b$ on a selected branch. That branch must give positive layer depths, maintain the ordering of the free surface and internal interface, and remain in the prescribed hyperbolic flow regime.

\noindent\textbf{b. Local reference equilibrium recovery.}

Writing $\boldsymbol{u}_h=(h_{1,h},m_{1,h},h_{2,h},m_{2,h})^T$, let $\boldsymbol{\mu}(\boldsymbol{u},b)=(m_1,m_2,h_1,h_2+b)^T$ and $\bar{\boldsymbol{v}}_j=(\bar m_{1,j},\bar E_{1,j},\bar m_{2,j},\bar E_{2,j})^T$. The two discharge components are fixed by
\begin{equation}
\bar m_{1,j}=\frac{1}{\Delta x_j}\int_{I_j}m_{1,h}(x)\,dx,
\qquad
\bar m_{2,j}=\frac{1}{\Delta x_j}\int_{I_j}m_{2,h}(x)\,dx.
\end{equation}
The two energy components are then determined by the nonlinear matching system
\begin{equation}
\begin{cases}
\displaystyle
\frac{1}{\Delta x_j}\int_{I_j}h_1^{\mathrm{eq}}(x;\bar{\boldsymbol{v}}_j)\,dx
=\frac{1}{\Delta x_j}\int_{I_j}h_{1,h}(x)\,dx,\\[3mm]
\displaystyle
\frac{1}{\Delta x_j}\int_{I_j}\bigl(h_2^{\mathrm{eq}}(x;\bar{\boldsymbol{v}}_j)+b_j(x)\bigr)\,dx
=\frac{1}{\Delta x_j}\int_{I_j}\bigl(h_{2,h}(x)+b_j(x)\bigr)\,dx.
\end{cases}
\label{eq:two-layer-local-equilibrium-recovery}
\end{equation}
At each $x\in I_j$, the depths in \eqref{eq:two-layer-local-equilibrium-recovery} are recovered on the selected admissible branch from
\begin{equation}
\begin{cases}
\displaystyle
\bar E_{1,j}=\frac{\bar m_{1,j}^2}{2(h_1^{\mathrm{eq}})^2}
+g\bigl(h_1^{\mathrm{eq}}+h_2^{\mathrm{eq}}+b_j(x)\bigr),\\[2mm]
\displaystyle
\bar E_{2,j}=\frac{\bar m_{2,j}^2}{2(h_2^{\mathrm{eq}})^2}
+g\bigl(rh_1^{\mathrm{eq}}+h_2^{\mathrm{eq}}+b_j(x)\bigr).
\end{cases}
\label{eq:two-layer-pointwise-equilibrium-recovery}
\end{equation}
Equations \eqref{eq:two-layer-local-equilibrium-recovery}--\eqref{eq:two-layer-pointwise-equilibrium-recovery} give the model-specific form of \eqref{eq:wb-interior-matching-average}--\eqref{eq:wb-interior-reference-v}. The resulting equilibrium state $\boldsymbol{u}^{\mathrm{eq}}_j=\tau^{-1}(\bar{\boldsymbol{v}}_j,b_j)$ is projected to define $\boldsymbol{u}^e$ as in \eqref{eq:wb-interior-reference-projection}.

\noindent\textbf{c. GHR interface reconstruction and path integration.}

On smooth portions of the bottom, the source term in \eqref{eq:two-layer-flux-source} contains $b_x$, while bottom jumps are included in the GHR interface construction. At $x_{j+\frac{1}{2}}$, we choose $b_{j+\frac{1}{2}}^*=\max(b_{j+\frac{1}{2}}^-,b_{j+\frac{1}{2}}^+)$ whenever both reconstructed layer depths remain admissible. The two states in \eqref{eq:ghr-reconstructed-conservative-state-v2} are recovered on this common bottom datum. Along admissible outer equilibrium paths with fixed $(m_1,E_1,m_2,E_2)$, the steady relations give $\widetilde{\mathcal{A}}(\boldsymbol{w})\boldsymbol{w}_s=0$, as required in Theorem~\ref{thm:ghr-path-conservation}. For the reconstructed-state path in \eqref{eq:ghr-reconstructed-path-v2}, we use the linear parametrization
\begin{equation}
\boldsymbol{\Phi}_{j+\frac{1}{2}}^{*}(s)
=\boldsymbol{u}_{j+\frac{1}{2}}^{*,-}
+s\bigl(\boldsymbol{u}_{j+\frac{1}{2}}^{*,+}-\boldsymbol{u}_{j+\frac{1}{2}}^{*,-}\bigr),
\qquad s\in[0,1],
\label{eq:two-layer-linear-reconstructed-state-path}
\end{equation}
provided this segment stays in the admissible state set. The one-sided effective flux corrections account for the change to the common datum.

Along this linear path, the two-layer shallow water matrix $G$ is affine, so the midpoint rule is exact:
\begin{equation}
\boldsymbol{g}_{\boldsymbol{\Phi},j+\frac{1}{2}}^{\,*}
=
G(\bar{\boldsymbol{u}}_{j+\frac{1}{2}}^{*})
\bigl(\boldsymbol{u}_{j+\frac{1}{2}}^{*,+}-\boldsymbol{u}_{j+\frac{1}{2}}^{*,-}\bigr),
\qquad
\bar{\boldsymbol{u}}_{j+\frac{1}{2}}^{*}
=
\frac{1}{2}\bigl(\boldsymbol{u}_{j+\frac{1}{2}}^{*,-}+\boldsymbol{u}_{j+\frac{1}{2}}^{*,+}\bigr).
\label{eq:ghr-evaluated-path-integral-v2}
\end{equation}

\noindent\textbf{d. Nonlinear evaluation and limiter compatibility.}

The volume terms use the polynomial pair $(\boldsymbol{u}_h,\boldsymbol{u}^e)$, while the interface equilibrium variables for GHR are evaluated from $\boldsymbol{u}^{\mathrm{eq},\pm}+\boldsymbol{u}^{r,\pm}$ as in \eqref{eq:ghr-interface-equilibrium-traces}. The physical-flux correction terms retain the original DG polynomial traces. Under exact recovery of the same target equilibrium in adjacent cells and compatible admissible branches, $\boldsymbol{u}^r=0$ and the interface equilibrium variables coincide, so both volume and interface contributions cancel.

The two-layer shallow water equations provide a representative example of this compatibility. For this model, $\boldsymbol{\sigma}=b$, the layer depths $h_1$ and $h_2$ satisfy conservative mass equations, the first and third rows of $G(\boldsymbol{u})$ vanish, and the first and third components of $\boldsymbol{s}(\boldsymbol{u},b)$ are zero. Moreover, the GHR reconstruction preserves the layer discharges, and the reference equilibrium residual contributes no additional cell-average source term to the depth equations. Thus a scaling limiter may be applied to each physical layer depth about its cell average at the chosen positivity nodes after each SSP-RK stage, while the bottom remains fixed. If a pre-balanced variable such as $H_2=h_2+b$ is evolved, the limiter is applied to $h_2=H_2-b$. For the standard DG scaling construction and a two-layer positive-depth flux treatment, see \cite{xing2010positivity,kurganov2009central}, respectively; these references do not establish a positivity theorem for the present GHR discretization.

\subsection{Blood flow equations}
\label{subsec:application-blood-flow}

Earlier GHR-based high-order finite volume schemes for blood flow in vessels with varying mechanical properties were developed by M\"uller et al. \cite{muller2013blood}. For blood flow through a thin-walled deformable elastic vessel with possibly discontinuous mechanical and geometric properties, we consider the model in \cite{pimentel2023high},
\begin{equation}
\begin{cases}
A_t+q_x=0,\\[1mm]
q_t+\left(\dfrac{q^2}{A}\right)_x+\dfrac{A}{\rho}p_x=0,
\end{cases}
\label{eq:blood-flow-system}
\end{equation}
with the pressure law
\begin{equation}
p(x,t)=K(x)\phi\left(\frac{A(x,t)}{A_0(x)}\right)+p_e(x),
\qquad
\phi(a)=a^{m_{\mathrm p}}-a^{n_{\mathrm p}}.
\label{eq:blood-flow-pressure-law}
\end{equation}
Here $A$ is the cross-sectional area of the vessel, $q=Au$ is the volumetric flow rate, $u$ is the cross-sectionally averaged blood velocity, and $\rho$ is the constant fluid density. The prescribed functions $A_0(x)$, $K(x)$, and $p_e(x)$ denote the unloaded cross-sectional area, the vessel-wall stiffness coefficient, and the external pressure, respectively. The pressure-law exponents satisfy $m_{\mathrm p}>0$ and $n_{\mathrm p}\in(-2,0]$; the commonly used choices are $(m_{\mathrm p},n_{\mathrm p})=(1/2,0)$ for arteries and $(m_{\mathrm p},n_{\mathrm p})=(10,-3/2)$ for collapsible vessels such as veins.

To exhibit the non-conservative structure of \eqref{eq:blood-flow-system}, we collect the physical variables and prescribed vessel data in the augmented vector
\begin{equation}
\boldsymbol{w}=(A,q,K,A_0,p_e)^T,
\qquad
a=\frac{A}{A_0},
\label{eq:blood-flow-augmented-state}
\end{equation}
and the functions, defined for $a>0$ by
\begin{equation}
\Phi(a)=\int_1^a\phi(z)\,dz,
\qquad
\widetilde\Phi(a)=a\phi(a)-\Phi(a).
\label{eq:blood-flow-Phi-definitions}
\end{equation}
Thus,
\begin{equation}
\Phi(1)=\widetilde\Phi(1)=0,
\qquad
\Phi'(a)=\phi(a),
\qquad
\widetilde\Phi'(a)=a\phi'(a),
\qquad
\Phi(a)+\widetilde\Phi(a)=a\phi(a).
\label{eq:blood-flow-Phi-identities}
\end{equation}
This normalization follows \cite{pimentel2023high}.
The corresponding flux and non-conservative matrix are
\begin{equation}
\begin{aligned}
\boldsymbol{f}(\boldsymbol{w})
&=
\begin{pmatrix}
q\\[1mm]
\dfrac{q^2}{A}+\dfrac{K A_0}{\rho}\widetilde\Phi(a)\\[2mm]
0\\
0\\
0
\end{pmatrix},
\qquad
G(\boldsymbol{w})
&=
\begin{pmatrix}
0&0&0&0&0\\
0&0&\dfrac{A_0}{\rho}\Phi(a)&-\dfrac{K}{\rho}\widetilde\Phi(a)&\dfrac{A}{\rho}\\[2mm]
0&0&0&0&0\\
0&0&0&0&0\\
0&0&0&0&0
\end{pmatrix}.
\end{aligned}
\label{eq:blood-flow-flux-source}
\end{equation}
For the frictionless model considered here, the source term is $\boldsymbol{s}(\boldsymbol{w},\boldsymbol{\sigma})=\boldsymbol{0}$. In the general formulation, $\boldsymbol{u}$ is the five-component vector $\boldsymbol{w}$ itself: its last three components form the prescribed stationary block $\boldsymbol{\sigma}=(K,A_0,p_e)^T$. Thus $\boldsymbol{w}$ is already the augmented state vector and its augmented matrix is $\widetilde{\mathcal{A}}=\mathcal{A}(\boldsymbol{w})$; no second copy of $\boldsymbol{\sigma}$ is introduced. The prescribed components retain their values throughout the time update and are supplied directly from the vessel data in spatial evaluations; only $(A,q)$ has a nontrivial update.

\noindent\textbf{a. Equilibrium variables and admissible branch.}

In the equilibrium-variable transformation \eqref{eq:equilibrium-variable-transform}, we take $\boldsymbol{v}=(q,\Gamma)^T$. The moving-flow equilibrium is characterized by
\begin{equation}
q=\mathrm{const},
\qquad
\Gamma
=\frac{\rho q^2}{2A^2}
+K\phi\left(\frac{A}{A_0}\right)+p_e
=\mathrm{const},
\label{eq:blood-flow-moving-equilibrium}
\end{equation}
For $A>0$, $A_0>0$, and $K>0$, the local wave speed and Shapiro number are
\begin{equation}
c=\sqrt{\frac{Ka}{\rho}\phi'(a)},
\qquad
\mathrm{Sh}=\frac{|u|}{c}.
\label{eq:blood-flow-shapiro-number}
\end{equation}
The admissible inverse branch is selected according to the local flow regime; in particular, the moving equilibria considered below lie on the subcritical branch $\mathrm{Sh}<1$.
The blood-at-rest equilibrium is the special case
\begin{equation}
q=0,
\qquad
K\phi\left(\frac{A}{A_0}\right)+p_e=\mathrm{const}.
\label{eq:blood-flow-rest-equilibrium}
\end{equation}

\noindent\textbf{b. Local reference equilibrium recovery.}

Writing the physical components of $\boldsymbol{w}_h$ as $(A_h,q_h)$, let $\boldsymbol{\mu}(\boldsymbol{w},\boldsymbol{\sigma})=(q,A)^T$ and $\bar{\boldsymbol{v}}_j=(\bar q_j,\bar\Gamma_j)^T$. The discharge is fixed by its cell average, while $\bar\Gamma_j$ is determined by the scalar nonlinear matching equation
\begin{equation}
\bar q_j=\frac{1}{\Delta x_j}\int_{I_j}q_h(x)\,dx,
\qquad
\frac{1}{\Delta x_j}\int_{I_j}A^{\mathrm{eq}}(x;\bar q_j,\bar\Gamma_j)\,dx
=\frac{1}{\Delta x_j}\int_{I_j}A_h(x)\,dx.
\label{eq:blood-flow-local-equilibrium-recovery}
\end{equation}
where, at each spatial point, $A^{\mathrm{eq}}>0$ is the solution on the selected Shapiro branch of
\begin{equation}
\bar\Gamma_j
=\frac{\rho\bar q_j^2}{2(A^{\mathrm{eq}})^2}
+K_j(x)\phi\left(\frac{A^{\mathrm{eq}}}{A_{0,j}(x)}\right)+p_{e,j}(x).
\label{eq:blood-flow-local-area-recovery}
\end{equation}
Equations \eqref{eq:blood-flow-local-equilibrium-recovery}--\eqref{eq:blood-flow-local-area-recovery} give the model-specific form of \eqref{eq:wb-interior-matching-average}--\eqref{eq:wb-interior-reference-v}. The recovered equilibrium state $\boldsymbol{w}^{\mathrm{eq}}_j=\tau^{-1}(\bar{\boldsymbol{v}}_j,\boldsymbol{\sigma}_j)$ is projected in its physical components to obtain $\boldsymbol{w}^e$ as in \eqref{eq:wb-interior-reference-projection}. Here $\boldsymbol{w}_h$ and $\boldsymbol{w}^e$ correspond to $\boldsymbol{u}_h$ and $\boldsymbol{u}^e$ in the general construction, while the prescribed block $(K,A_0,p_e)^T$ is retained unchanged in both states and in all spatial evaluations.

\noindent\textbf{c. GHR interface reconstruction and path integration.}

At an interface, let $\boldsymbol{\sigma}^\pm=(K^\pm,A_0^\pm,p_e^\pm)^T$ denote the exact one-sided prescribed data and let $(A^\pm,q^\pm)$ be the corresponding solution traces. Following \cite{pimentel2023high}, we choose the common intermediate datum
\begin{equation}
\boldsymbol{\sigma}^*=\begin{pmatrix}K^*\\ A_0^*\\ p_e^*\end{pmatrix},
\qquad
p_e^*=\min(p_e^-,p_e^+),
\qquad
A_0^*=\max(A_0^-,A_0^+),
\label{eq:blood-flow-star-parameters}
\end{equation}
with
\begin{equation}
K^*=
\begin{cases}
\max(K^-,K^+),
& A^-\le A_0^*,\ A^+<A_0^*,\\[1mm]
\min(K^-,K^+),
& A^-\ge A_0^*,\ A^+\ge A_0^*,\\[1mm]
\dfrac{K^-+K^+}{2},
& \text{otherwise}.
\end{cases}
\label{eq:blood-flow-K-star}
\end{equation}
With $\boldsymbol{v}^\pm=(q^\pm,\Gamma^\pm)^T$ and the common datum $\boldsymbol{\sigma}^*$, the reconstructed physical states are obtained from the general GHR relation \eqref{eq:ghr-reconstructed-conservative-state-v2} on the admissible branch associated with each original trace. Following the augmented construction in \cite{pimentel2023high}, the resulting connection consists of seven segments. Starting from the left state, three equilibrium segments change $p_e$, $A_0$, and $K$ successively to their common values while keeping $(q,\Gamma)$ fixed. Differentiating these equilibrium relations and using \eqref{eq:blood-flow-flux-source} gives $\mathcal{A}(\boldsymbol{w})\boldsymbol{w}_s=0$ on each admissible segment, as required for the outer paths of Theorem~\ref{thm:ghr-path-conservation}. A reconstructed-state segment at fixed $\boldsymbol{\sigma}^*$ connects the two reconstructed states, with the same linear formula as \eqref{eq:two-layer-linear-reconstructed-state-path} applied to the blood-flow states whenever the segment is admissible. Three equilibrium segments in reverse parameter order then connect the right reconstructed state to the original right state. The equilibrium segments have zero path integrals, whereas the modified path integral is evaluated on the reconstructed-state segment. Thus, discontinuities in the vessel data are treated in the full augmented state space already defined by $\boldsymbol{w}$, while \eqref{eq:ghr-reconstructed-path-v2} represents only the reconstructed-state segment of this model-specific path.

\noindent\textbf{d. Nonlinear evaluation and limiter compatibility.}

For nonlinear volume and interface evaluations, we use the equilibrium-lifted pair
\begin{equation}
\widetilde{\boldsymbol{w}}^{e}=\boldsymbol{w}^{\mathrm{eq}}_j,
\qquad
\widetilde{\boldsymbol{w}}_h
=\boldsymbol{w}^{\mathrm{eq}}_j+(\boldsymbol{w}_h-\boldsymbol{w}^e)
=\widetilde{\boldsymbol{w}}^{e}+\boldsymbol{w}^r.
\label{eq:blood-flow-lifted-evaluation-pair}
\end{equation}
Here $\boldsymbol{w}^{\mathrm{eq}}_j=\tau^{-1}(\bar{\boldsymbol{v}}_j,\boldsymbol{\sigma}_j(x))$ is the recovered augmented equilibrium state. The pair $(\widetilde{\boldsymbol{w}}_h,\widetilde{\boldsymbol{w}}^{e})$ replaces $(\boldsymbol{w}_h,\boldsymbol{w}^e)$ in all nonlinear spatial evaluations, including the volume terms and the GHR interface terms, while the time derivative, equilibrium recovery, and perturbation remain those of $\boldsymbol{w}_h$. The prescribed block $\boldsymbol{\sigma}=(K,A_0,p_e)^T$ is inserted unchanged, and the one-sided traces of $\widetilde{\boldsymbol{w}}_h$ are used in the GHR reconstruction and in the conditions defining $K^*$ in \eqref{eq:blood-flow-K-star}. Since $\widetilde{\boldsymbol{w}}_h-\widetilde{\boldsymbol{w}}^{e}=\boldsymbol{w}^r$, both evaluation states coincide with $\boldsymbol{w}^{\mathrm{eq}}_j$ when $\boldsymbol{w}^r=0$. Consequently, the interior and interface cancellations established for the general scheme remain valid under the same interface-matching and admissible-branch conditions.

The area equation $A_t+q_x=0$ has the corresponding row of the augmented non-conservative matrix equal to zero, and the GHR reconstruction preserves the discharge. This makes an area-scaling limiter compatible with the conservative cell-average update: it may be applied to the evolved polynomial $A_h$ about its cell average at the chosen positivity nodes after each SSP-RK stage. Because the nonlinear spatial operator uses the lifted area $\widetilde A_h$ from \eqref{eq:blood-flow-lifted-evaluation-pair}, admissibility of that evaluation state and of the GHR interface states must be checked separately. For the general scaling-limiter construction and a model-specific positivity-preserving blood-flow scheme, see \cite{zhang2012maximum,hernandezduenas2021blood}, respectively. This compatibility discussion does not establish positivity or hyperbolicity of the full augmented state.

\section{Numerical examples}
\label{se:nu}

We assess the PCDG-GHR method using the two one-dimensional models specified in Section~\ref{se:applications}: the two-layer shallow water equations and the blood flow equations. The numerical examples test high-order accuracy, exact equilibrium preservation, the resolution of small perturbations around equilibria, and the treatment of discontinuous flows.

Unless otherwise specified, we use piecewise quadratic $P^2$ polynomials in space and the third-order SSP-RK time discretization in \eqref{eq:ssp-rk3-ghr-pcdg}. The time step is chosen according to the CFL condition
\begin{equation}
\Delta t
:=
\frac{\mathrm{CFL}}{2k+1}\frac{\Delta x}{\max |\lambda_{\max}|},
\label{eq:dg-cfl-condition}
\end{equation}
where $k$ is the polynomial degree and $\lambda_{\max}$ denotes the largest characteristic speed over the computational domain. Unless otherwise specified, we take $\mathrm{CFL}=0.5$. The TVD limiter, equivalently the TVB limiter with $M=0$, is used for non-smooth tests when needed. In the accuracy tests and the exact equilibrium-preservation tests, the limiter is turned off or chosen to remain inactive so that it does not modify the discrete equilibrium state.

\subsection{Two-layer shallow water equations}

For the two-layer shallow water equations \eqref{eq:two-layer-system}, we test high-order accuracy, preservation of still-water and moving-water equilibria, small perturbations of these equilibria, interface propagation, and a Riemann problem over discontinuous bottom topography. Unless stated otherwise, the spatial and temporal discretizations follow the settings above; only model-specific parameters and initial data are given below.

\exampletitle{exmp:two-layer-accuracy}{Accuracy test}

To verify the high-order accuracy of the proposed well-balanced path-conservative DG scheme for a
smooth solution, we choose the same bottom topography and initial conditions as in \cite{kurganov2009central}, given by
\begin{equation}
    \begin{aligned}
    b(x) & =\sin^2(\pi x)-10,\quad h_1(x,0)=5+e^{\cos(2\pi x)}, \\
    H_2(x,0) & =-5-e^{\cos(2\pi x)},\quad m_1(x,0)=m_2(x,0)=0,
    \end{aligned}
\label{eq:two-layer-accuracy-initial-data}
\end{equation}
We impose 1-periodic boundary conditions on $[0,1]$. The gravitational constant is taken as $g=9.8$, and the density ratio is $r=0.98$. The simulations are run up to $t=0.01$, before shock formation. The numerical solution computed by the PCDG-GHR method on a refined mesh with $N_x=12800$ is used as the reference solution.

\begin{table}[!htb]
\centering
    \caption{Example~\ref{exmp:two-layer-accuracy}. $L^1$ errors and numerical orders of accuracy with initial conditions \eqref{eq:two-layer-accuracy-initial-data}, using $P^2$ polynomial.}
\label{tab:two-layer-accuracy-test}
\begin{tabular}{lllllllll}
\toprule
\multirow{2}{*}{$N_x$} & \multicolumn{2}{l}{$h_1$} & \multicolumn{2}{l}{$m_1$} & \multicolumn{2}{l}{$h_2$} & \multicolumn{2}{l}{$m_2$} \\ \cline{2-9} 
                      & $L_1$ error    & order    & $L_1$ error    & order    & $L_1$ error    & order    & $L_1$ error    & order    \\
    \midrule
   20  &  2.04e-04  &    /  &  7.27e-05  &   /  &  2.02e-04  &    /  &  7.36e-05  &   / \\
   40  &  2.53e-05  &    3.01  &  2.06e-05  &    1.82  &  2.51e-05  &    3.01  &  2.07e-05  &    1.83 \\
   80  &  3.30e-06  &    2.94  &  4.95e-06  &    2.06  &  3.28e-06  &    2.94  &  4.95e-06  &    2.06 \\
  160  &  4.54e-07  &    2.86  &  1.06e-06  &    2.22  &  4.51e-07  &    2.86  &  1.06e-06  &    2.22 \\
  320  &  5.88e-08  &    2.95  &  1.99e-07  &    2.42  &  5.84e-08  &    2.95  &  1.99e-07  &    2.42 \\
  640  &  6.05e-09  &    3.28  &  3.09e-08  &    2.69  &  6.01e-09  &    3.28  &  3.08e-08  &    2.69 \\
 1280  &  5.66e-10  &    3.42  &  4.09e-09  &    2.92  &  5.64e-10  &    3.41  &  4.09e-09  &    2.91 \\
 \bottomrule
\end{tabular}
\end{table}

The results in Table~\ref{tab:two-layer-accuracy-test} show the expected high-order accuracy for this smooth two-layer test. The errors of $h_1$ and $h_2$ exhibit nearly third-order convergence after the coarsest levels, and the convergence orders of $m_1$ and $m_2$ approach three under mesh refinement. This verifies the designed accuracy of the $P^2$ PCDG-GHR method for the two-layer shallow water equations.

\exampletitle{exmp:two-layer-still-equilibrium}{Still-water equilibrium preservation}
In this example, we verify the well-balanced property for the still-water equilibrium. The continuous bottom topography
\begin{equation}
    b(x)=\left\{
    \begin{array}
    {ll}0.25(\cos(10\pi(x-0.5))+1)-2, & \mathrm{if~}0.4<x<0.6, \\
    -2 & \text{otherwise,}
    \end{array}\right.
\label{eq:two-layer-still-continuous-bottom}
\end{equation}
and the discontinuous bottom topography
\begin{equation}
    b(x)=\left\{
    \begin{array}
    {ll}-1.5, & \mathrm{if~}x>0.5, \\
    -2, & \text{otherwise,}
    \end{array}\right.
\label{eq:two-layer-still-discontinuous-bottom}
\end{equation}
are considered. The initial condition is given by
\begin{equation}
    \begin{aligned}
    (h_1,m_1,H_2,m_2)(x,0)=(h_1,m_1,H_2,m_2)_{\mathrm{eq}}(x)=(1,0,-1,0),
    \end{aligned}
\label{eq:two-layer-still-equilibrium-initial-data}
\end{equation}
in the computational domain $[-0.2,1]$. In this set of two-layer equilibrium tests, we take $g=10.0$ and $r=0.98$. The numerical solutions are computed up to $t=0.1$ with $N_x=100$ uniform cells.

\begin{table}[!htb]
\centering
\caption{Example~\ref{exmp:two-layer-still-equilibrium}. $L_1$ and $L_\infty$ errors for the one-dimensional still-water equilibrium state with continuous bottom topography, $N_x=100$}
\label{tab:two-layer-still-continuous}
\begin{tabular}{lllllll}
\toprule
error      & $h_1$        & $m_1$        & $h_2$        & $m_2$        & $E_1$        & $E_2$        \\
\midrule
$L_1$      & 7.00e-15 & 2.18e-16 & 7.04e-15 & 2.07e-16 & 5.46e-16 & 2.00e-15 \\
$L_\infty$ & 4.16e-14 & 2.17e-15 & 4.20e-14 & 2.74e-15 & 9.99e-15 & 1.76e-14 \\
\bottomrule
\end{tabular}
\end{table}

\begin{table}[!htb]
\centering
\caption{Example~\ref{exmp:two-layer-still-equilibrium}. $L_1$ and $L_\infty$ errors for the one-dimensional still-water equilibrium state with discontinuous bottom topography, $N_x=100$}
\label{tab:two-layer-still-discontinuous}
\begin{tabular}{lllllll}
\toprule
error      & $h_1$        & $m_1$        & $h_2$        & $m_2$        & $E_1$        & $E_2$        \\
\midrule
$L_1$      & 5.46e-17 & 3.55e-18 & 6.25e-17 & 3.55e-18 & 6.34e-17 & 2.00e-16 \\
$L_\infty$ & 2.64e-16 & 2.96e-18 & 2.13e-16 & 2.96e-18 & 4.38e-16 & 1.67e-16 \\
\bottomrule
\end{tabular}
\end{table}

Tables~\ref{tab:two-layer-still-continuous} and~\ref{tab:two-layer-still-discontinuous} report the errors for the continuous and discontinuous bottom topographies, respectively. In both cases, the errors in the conservative and equilibrium variables remain at round-off level. This confirms that the PCDG-GHR method preserves the two-layer still-water equilibrium for both smooth and discontinuous topographies.

\exampletitle{exmp:two-layer-small-still}{Small perturbation of still-water equilibrium}

We next test whether the method can resolve a localized perturbation of the still-water equilibrium. The setup is the same as in the preceding still-water equilibrium test. Starting from the equilibrium state \eqref{eq:two-layer-still-equilibrium-initial-data}, we add a small perturbation to the upper-layer depth:

\begin{equation}
\begin{aligned}
    & h_1(x,0)=(h_1)_{\mathrm{eq}}(x)+\left\{
    \begin{array}
    {ll}0.00001, & \mathrm{if~}x\in[0.1,0.2], \\
    0, & \text{otherwise,}
    \end{array}\right. \\
    & H_2(x,0)=(H_2)_{\mathrm{eq}}(x),\quad m_1(x,0)=(m_1)_{\mathrm{eq}}(x),\quad m_2(x,0)=(m_2)_{\mathrm{eq}}(x).
\end{aligned}
\label{eq:two-layer-still-perturbation-initial-data}
\end{equation}
The stopping time is $t = 0.15$. We calculate the solution on a coarse mesh with $N_x=200$ cells and compare it with a reference solution computed on a refined mesh with $N_x=2000$ cells.

\begin{figure}[!htb]
\centering
\includegraphics[width=1\textwidth]{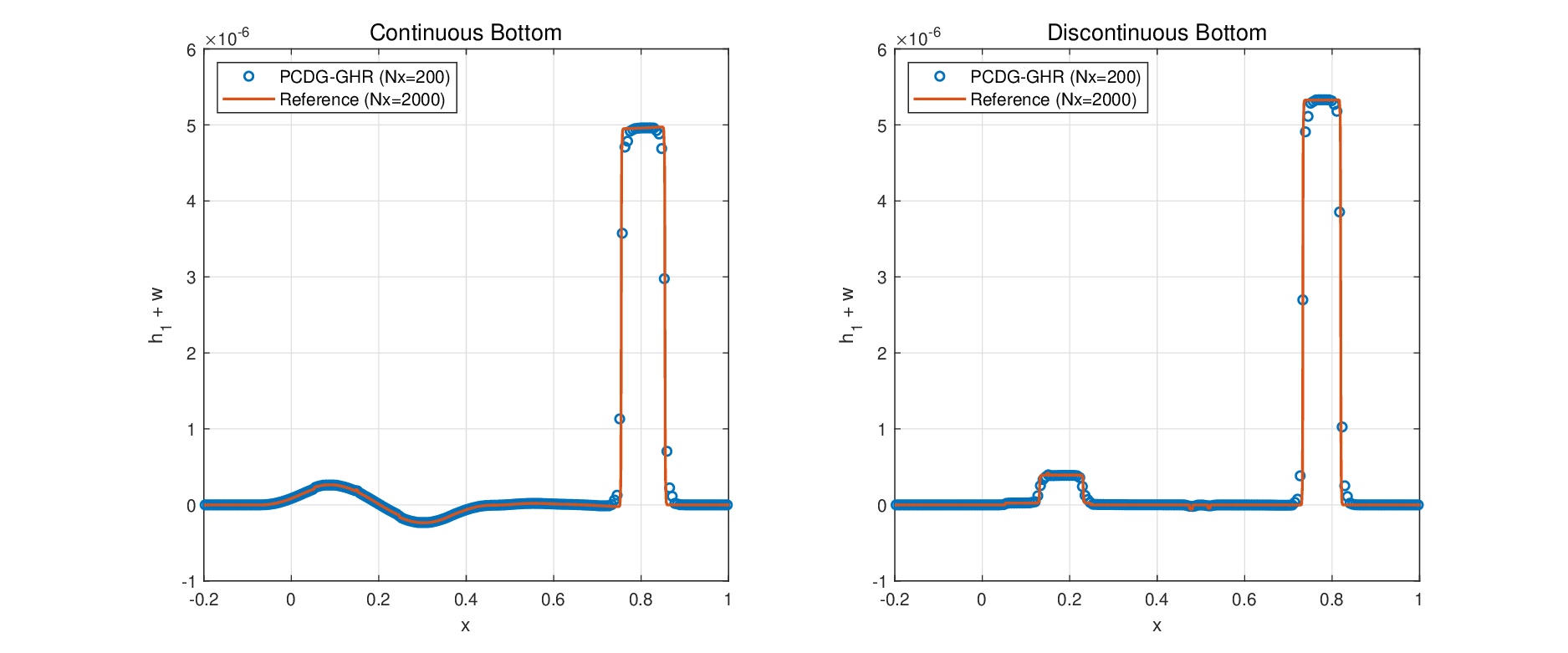}
\caption{Example~\ref{exmp:two-layer-small-still}. Small perturbation of the two-layer still-water equilibrium. The PCDG-GHR solution with $N_x=200$ is compared with the reference solution with $N_x=2000$.}
\label{fig:two-layer-small-still}
\end{figure}

Figure~\ref{fig:two-layer-small-still} shows the perturbation of the free surface $H_1$ for the continuous and discontinuous bottom topographies. The coarse-mesh solution agrees well with the refined reference solution and captures the propagation of the small disturbance from the underlying equilibrium state. No visible spurious oscillations are generated near the bottom discontinuity, which further illustrates the well-balanced resolution of small perturbations around the still-water state.

\exampletitle{exmp:two-layer-moving-equilibrium}{Moving-water equilibrium preservation}
In this example, we verify the well-balanced property for the moving-water equilibrium, namely the generalized moving-water steady state. We use the discontinuous bottom topography
\begin{equation}
    b(x)=\left\{
    \begin{array}
    {ll}-2, & \mathrm{if~}x<0, \\
    -1, & \text{otherwise,}
    \end{array}\right.
\label{eq:two-layer-moving-bottom}
\end{equation}
and the initial condition
\begin{equation}
    \begin{gathered}
    h_1(x,0)=(h_1)_{\mathrm{eq}}(x)=\left\{
    \begin{array}
    {ll}1.22373355048230, & \mathrm{if~}x<0, \\
    1.44970064153589, & \mathrm{otherwise}
    \end{array}\right. \\
    h_2(x,0)=(h_2)_{\mathrm{eq}}(x)=\left\{
    \begin{array}
    {ll}0.968329515483846, & \mathrm{if~}x<0, \\
    1.12439026921484, & \text{otherwise,}
    \end{array}\right. \\
    m_{1}(x,0)=(m_{1})_{\mathrm{eq}}(x)=12,\quad m_{2}(x,0)=(m_{2})_{\mathrm{eq}}(x)=10.
    \end{gathered}
\label{eq:two-layer-moving-equilibrium-initial-data}
\end{equation}
With $g=10.0$ and $r=0.98$, this discontinuous state satisfies $E_1\equiv 50$ and $E_2\equiv 55$, and therefore represents an exact moving-water equilibrium despite the jump in $h_1$, $h_2$, and $b$ at $x=0$. The numerical solution is computed on $[-1,1]$ up to $t=0.05$ with $N_x=100$ uniform cells.

\begin{table}[!htb]
\centering
\caption{Example~\ref{exmp:two-layer-moving-equilibrium}. $L_1$ and $L_\infty$ errors for the one-dimensional moving-water equilibrium state, $N_x=100$}
\label{tab:two-layer-moving-equilibrium}
\begin{tabular}{lllllll}
\toprule
error      & $h_1$        & $m_1$        & $h_2$        & $m_2$        & $E_1$        & $E_2$        \\
\midrule
$L_1$      & 2.22e-16 & 4.39e-15 & 1.06e-17 & 3.24e-16 & 2.47e-14 & 2.21e-15 \\
$L_\infty$ & 2.22e-16 & 3.55e-15 & 1.39e-17 & 3.03e-16 & 2.13e-14 & 2.06e-15 \\
\bottomrule
\end{tabular}
\end{table}

The round-off-level errors in Table~\ref{tab:two-layer-moving-equilibrium} confirm discrete preservation of the generalized two-layer moving-water equilibrium over discontinuous bottom topography, even on this relatively coarse mesh.

\exampletitle{exmp:two-layer-small-moving}{Small perturbation of moving-water equilibrium}

We finally consider a small perturbation of the moving-water equilibrium \eqref{eq:two-layer-moving-equilibrium-initial-data}. The bottom topography is still given by \eqref{eq:two-layer-moving-bottom}. A localized perturbation is added to the upper-layer depth while the lower-layer depth and both discharges are kept at their equilibrium values:
\begin{equation}
\begin{aligned}
    & h_1(x,0)=(h_1)_{\mathrm{eq}}(x)+\left\{
    \begin{array}
    {ll}0.001, & \mathrm{if~}x\in[-0.6,-0.5], \\
    0, & \text{otherwise,}
    \end{array}\right. \\
    & h_2(x,0)=(h_2)_{\mathrm{eq}}(x),\quad
    m_1(x,0)=(m_1)_{\mathrm{eq}}(x),\quad
    m_2(x,0)=(m_2)_{\mathrm{eq}}(x).
\end{aligned}
\label{eq:two-layer-moving-perturbation-initial-data}
\end{equation}
The solutions are shown at $t=0$, $0.02$, $0.05$, and $0.08$. We compute the PCDG-GHR solution with $N_x=200$ uniform cells and compare it with a reference solution computed with $N_x=1000$ uniform cells.

\begin{figure}[!htb]
\centering
\includegraphics[width=1\textwidth]{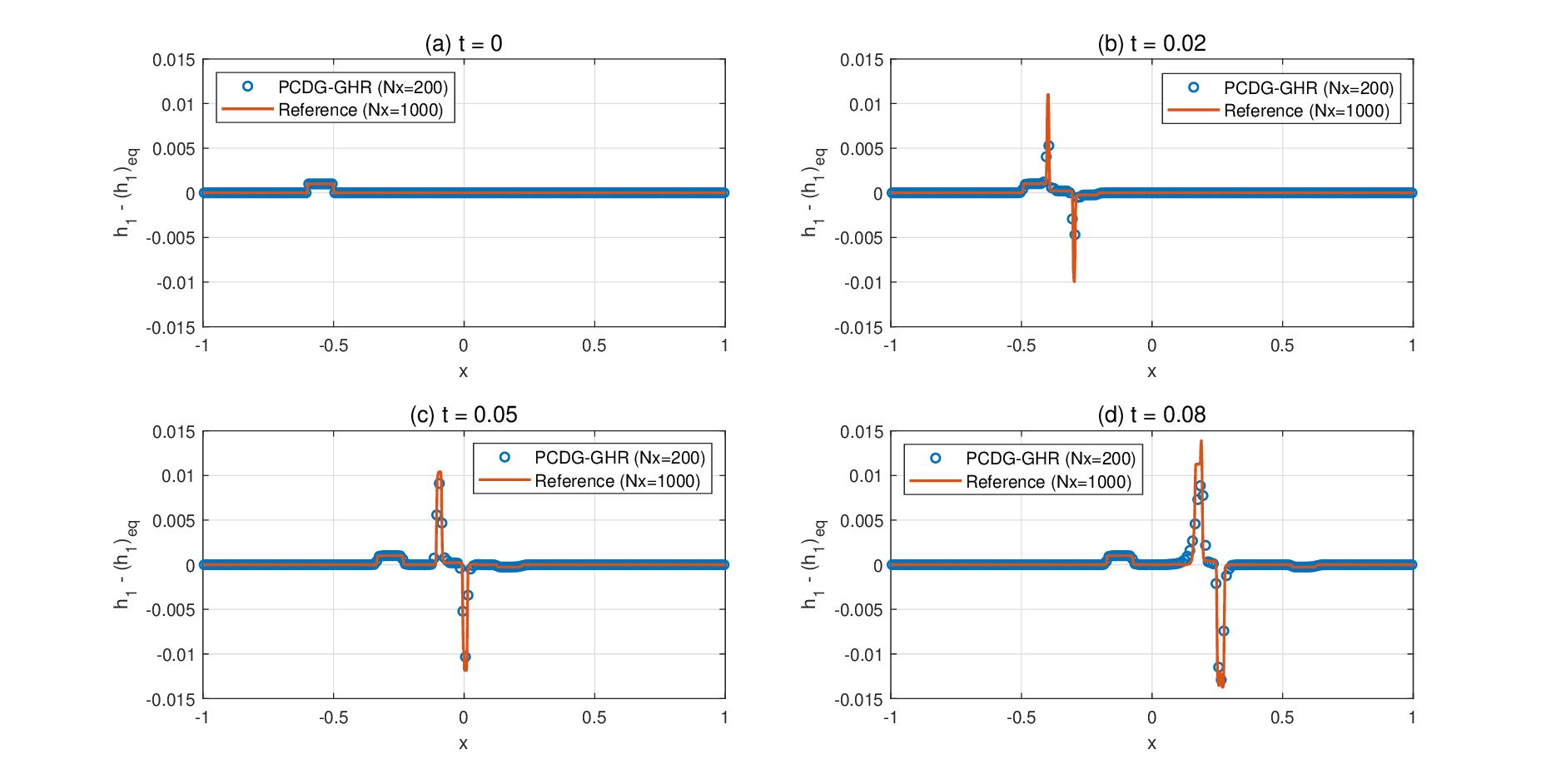}
\caption{Example~\ref{exmp:two-layer-small-moving}. Small perturbation of the two-layer moving-water equilibrium. The difference from the equilibrium upper-layer depth is shown at $t=0$, $0.02$, $0.05$, and $0.08$, with the PCDG-GHR solution computed on $N_x=200$ cells and the reference solution computed on $N_x=1000$ cells.}
\label{fig:two-layer-small-moving}
\end{figure}

Figure~\ref{fig:two-layer-small-moving} shows the evolution of the perturbation relative to the moving-water equilibrium. The coarse-mesh PCDG-GHR solution follows the refined reference solution at all displayed times and propagates the localized disturbance without producing spurious waves near the discontinuity of the bottom function. This confirms that the moving-water well-balanced treatment accurately resolves small perturbations of the moving-water equilibrium.

\exampletitle{exmp:two-layer-interface-propagation}{Interface propagation}

We use this example to examine the propagation of an internal interface in the two-layer system over a flat bottom,
\begin{equation}
b(x)=-1,
\label{eq:two-layer-interface-bottom}
\end{equation}
so that the test focuses on the non-conservative interlayer coupling rather than on a geometric source term. On the computational domain $[-1,1]$, the initial data are
\begin{equation}
(h_1,m_1,H_2,m_2)(x,0)=
\left\{
\begin{array}{ll}
(0.50,1.250,-0.50,1.250), & x<0.3,\\
(0.45,1.125,-0.45,1.375), & \text{otherwise,}
\end{array}
\right.
\label{eq:two-layer-interface-initial-data}
\end{equation}
so that the interface is initially located at $x=0.3$. We take $g=9.8$ and $r=0.98$, and compute the solution up to $t=0.1$. The PCDG-GHR solution is computed with $N_x=400$ uniform cells and compared with a refined reference solution with $N_x=4000$ cells.

\begin{figure}[!htb]
\centering
\includegraphics[width=1\textwidth]{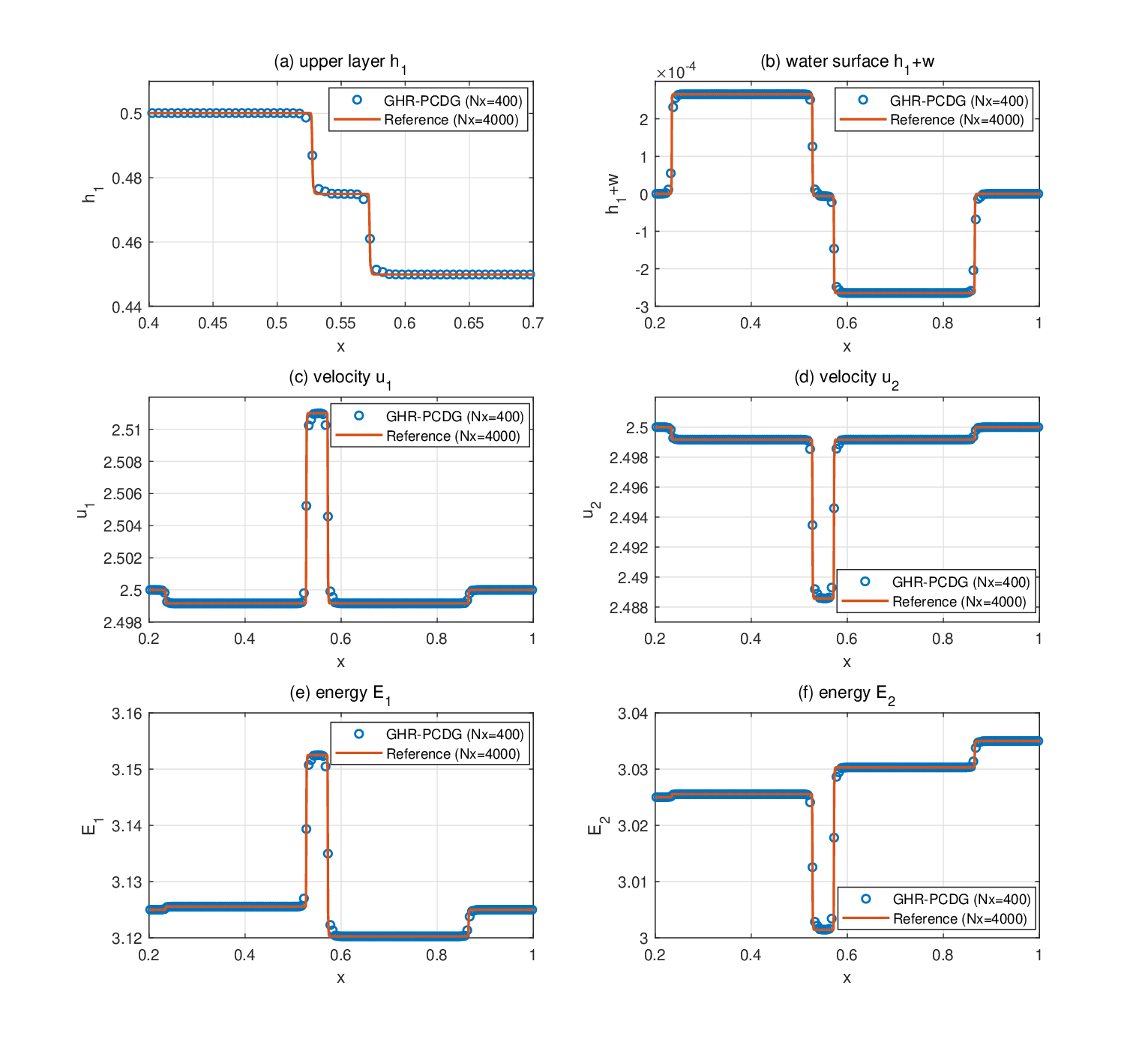}
\caption{Example~\ref{exmp:two-layer-interface-propagation}. Interface propagation for the two-layer shallow water equations. The PCDG-GHR solution with $N_x=400$ is compared with the reference solution with $N_x=4000$ for the upper-layer depth, the free surface $H_1$, the layer velocities, and the equilibrium variables.}
\label{fig:two-layer-interface-propagation}
\end{figure}

Figure~\ref{fig:two-layer-interface-propagation} shows that the method resolves the interface motion sharply while maintaining good agreement with the refined reference solution. The upper-layer depth develops an intermediate nearly flat state between the left and right states, and the velocities form corresponding plateau regions. The free surface $H_1$ and equilibrium variables remain well controlled across the propagating interface, indicating that the path-conservative GHR treatment captures the interlayer wave interaction without visible spurious oscillations.

\exampletitle{exmp:two-layer-riemann}{Riemann problem over discontinuous bottom topography}

We close the two-layer numerical tests with Riemann problems over a discontinuous bottom topography. The purpose is to compare the behavior of different path-conservative discretizations when both the solution and the prescribed bottom contain jumps. The bottom function is
\begin{equation}
b(x)=
\left\{
\begin{array}{ll}
-2, & x<0,\\
-1.5, & \text{otherwise,}
\end{array}
\right.
\label{eq:two-layer-riemann-bottom}
\end{equation}
on the computational domain $[-1,1]$. We take $g=10.0$ and $r=0.98$, and compute the solutions up to $t=0.1$. In the comparison, PCCU denotes the second-order path-conservative central-upwind scheme of Castro D\'iaz et al.~\cite{castro2019path}; the PCDG-still and PCDG-moving schemes are the equilibrium-preserving path-conservative DG schemes for the two-layer model in \cite{ZHANG2025}. These results are compared with the present PCDG-GHR method.

The first Riemann initial condition is
\begin{equation}
(h_1,m_1,h_2,m_2)(x,0)=
\left\{
\begin{array}{ll}
(1.0,1.5,1.0,1.0), & x<0,\\
(0.8,1.2,1.2,1.8), & \text{otherwise,}
\end{array}
\right.
\label{eq:two-layer-riemann-test1-initial-data}
\end{equation}
and the second one is
\begin{equation}
(h_1,m_1,h_2,m_2)(x,0)=
\left\{
\begin{array}{ll}
(1.5,1.1,1.0,1.4), & x<0,\\
(1.2,1.6,0.9,1.2), & \text{otherwise.}
\end{array}
\right.
\label{eq:two-layer-riemann-test2-initial-data}
\end{equation}

\begin{figure}[!htb]
\centering
\includegraphics[width=1\textwidth]{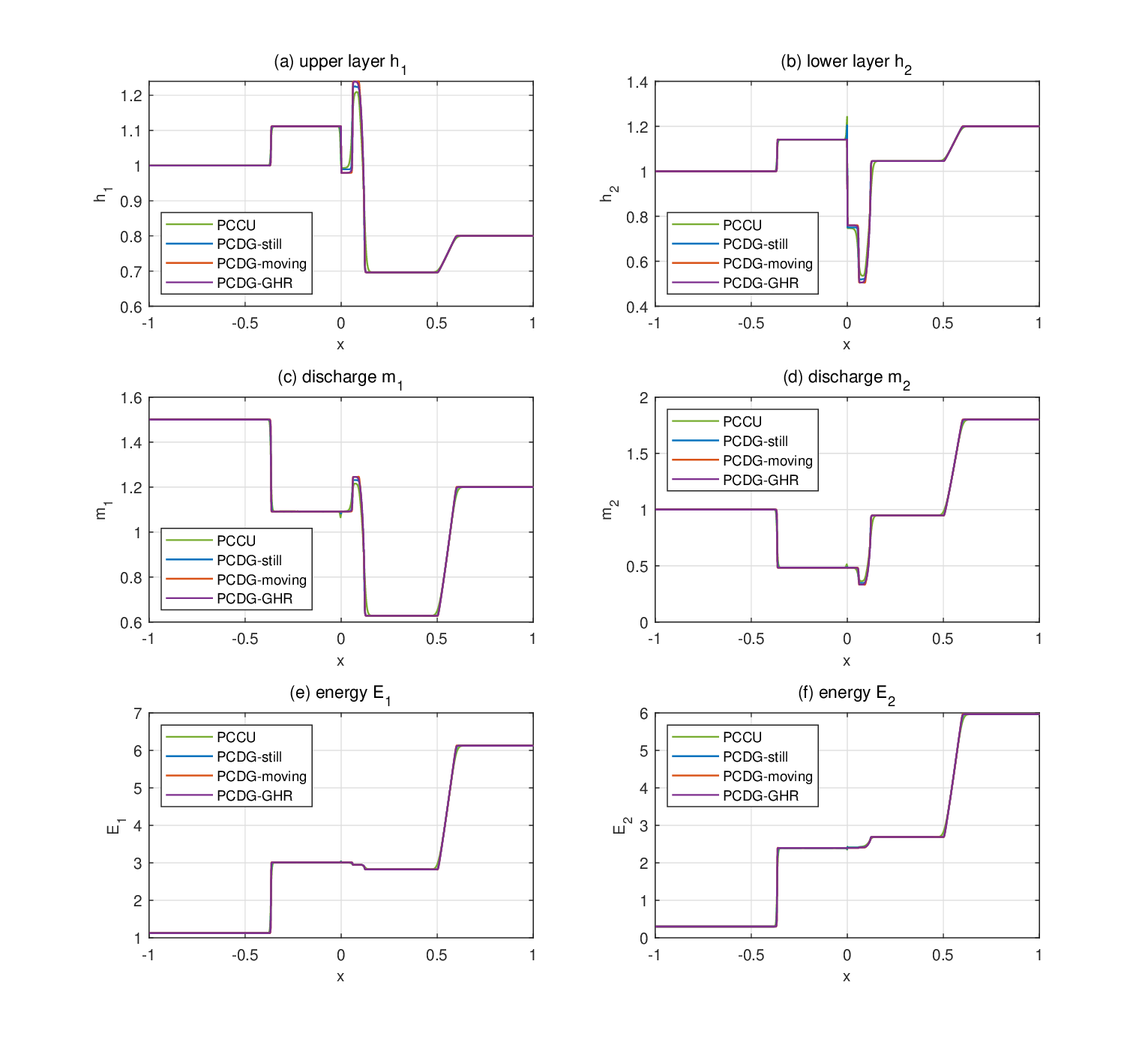}
\caption{Example~\ref{exmp:two-layer-riemann}. Riemann problem over discontinuous bottom topography for Test 1. The upper- and lower-layer depths, discharges, and equilibrium variables are compared among PCCU, PCDG-still, PCDG-moving, and PCDG-GHR.}
\label{fig:two-layer-riemann-test1}
\end{figure}

\begin{figure}[!htb]
\centering
\includegraphics[width=1\textwidth]{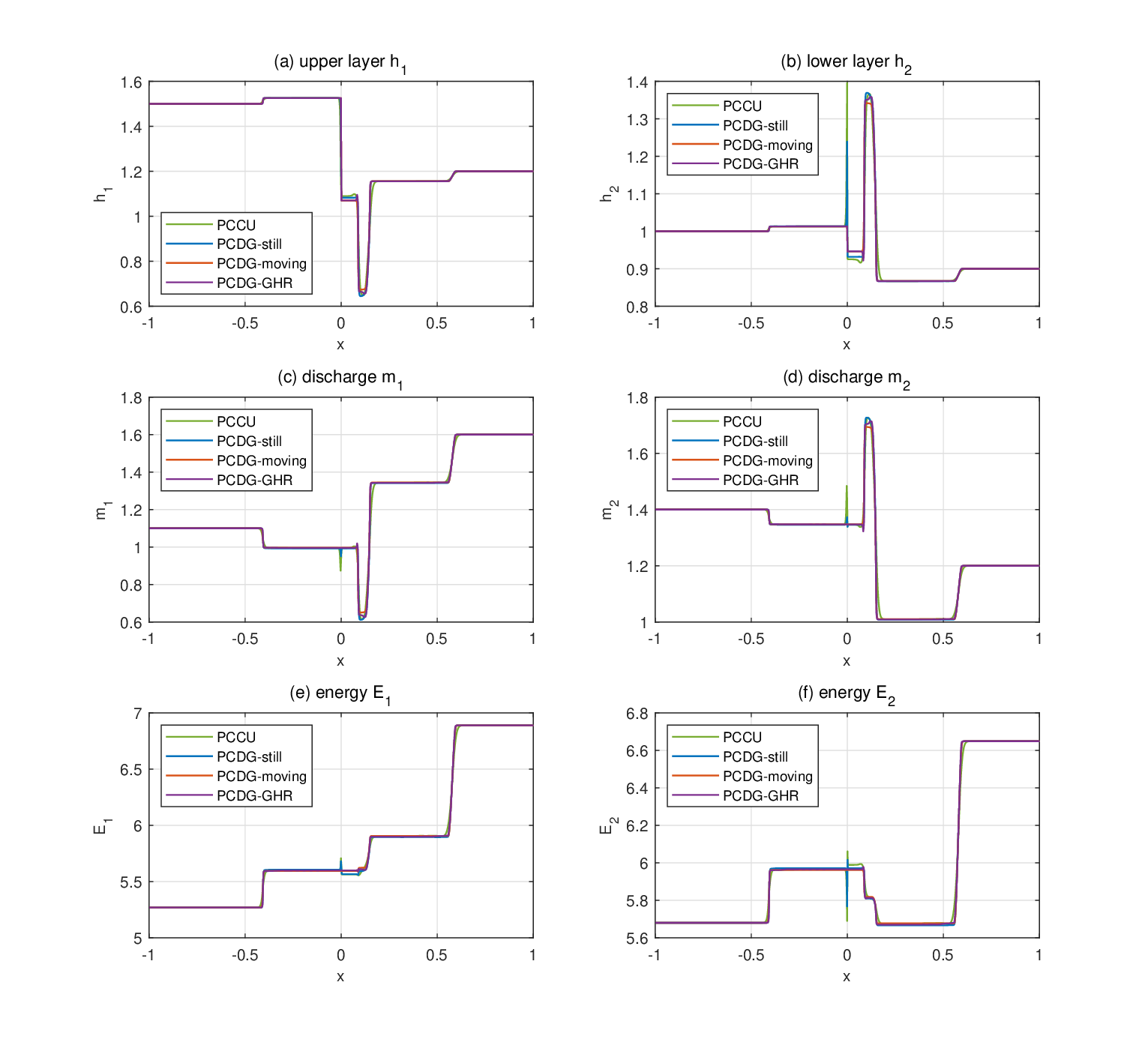}
\caption{Example~\ref{exmp:two-layer-riemann}. Riemann problem over discontinuous bottom topography for Test 2. The upper- and lower-layer depths, discharges, and equilibrium variables are compared among PCCU, PCDG-still, PCDG-moving, and PCDG-GHR.}
\label{fig:two-layer-riemann-test2}
\end{figure}

Figures~\ref{fig:two-layer-riemann-test1} and~\ref{fig:two-layer-riemann-test2} show that the treatment of the discontinuous bottom visibly affects the computed solution near $x=0$. The fixed jump in the bottom produces a stationary material contact, namely a zero-speed wave associated with the discontinuous prescribed data. In both tests, the PCCU and PCDG-still solutions produce spurious oscillatory peaks at this contact. By contrast, the PCDG-moving and PCDG-GHR solutions remain nonoscillatory and agree more closely with the reference solution. These comparisons indicate that preserving the moving-water equilibrium structure is important for resolving the stationary contact without generating spurious waves.

\subsection{Blood flow equations}

For the blood flow equations \eqref{eq:blood-flow-system}, we test high-order accuracy, preservation of blood-at-rest and moving-flow equilibria with smooth or discontinuous vessel properties, small perturbations of moving equilibria, and a Riemann problem with discontinuous vessel data. The examples are mainly based on \cite{pimentel2023high}. Unless stated otherwise, the spatial and temporal discretizations follow the settings above. We take $\rho=1050\;\mathrm{kg/m^3}$ throughout; each example specifies its tube-law exponents, vessel properties, domain, and initial and boundary data.

\exampletitle{exmp:blood-flow-accuracy}{Accuracy test}

To verify the high-order accuracy of the PCDG-GHR method for a smooth solution, we consider a vessel with smoothly varying properties. We take $\rho=1050\;\mathrm{kg/m^3}$, $(m_{\mathrm p},n_{\mathrm p})=(1/2,0)$, and the computational domain $[0,5]\;\mathrm{m}$. The prescribed vessel properties are
\begin{equation}
\begin{aligned}
K(x) &= 58725+100e^{-10(x-2.5)^2}\;\mathrm{Pa},\\
A_0(x) &= 5\times10^{-4}+10^{-4}e^{-10(x-2.5)^2}\;\mathrm{m^2},\\
p_e(x) &= 10000+100e^{-10(x-2.5)^2}\;\mathrm{Pa}.
\end{aligned}
\label{eq:blood-flow-accuracy-vessel-data}
\end{equation}
Let $A^{\mathrm{eq}}(x)$ denote the blood-at-rest equilibrium satisfying $A^{\mathrm{eq}}(0)=A_L=2.0456A_{0,L}$ and $q^{\mathrm{eq}}=0$. A smooth perturbation is added to the area,
\begin{equation}
A(x,0)=A^{\mathrm{eq}}(x)+10^{-6}e^{-40(x-1)^2}\;\mathrm{m^2},
\qquad q(x,0)=0.
\label{eq:blood-flow-accuracy-initial-data}
\end{equation}
We use an equilibrium-characteristic boundary treatment: at each boundary, the incoming characteristic information is prescribed by the equilibrium background state, whereas the outgoing information is taken from the interior solution. The simulations are run up to $t=0.5\;\mathrm{s}$. The numerical solution computed by the $P^2$ PCDG-GHR method on a refined mesh with $N_x=6400$ is used as the reference solution. The errors below are evaluated for the dimensionless variables $A/A_0$, $q/(5\times10^{-4}\;\mathrm{m^3/s})$, $u/(1\;\mathrm{m/s})$, and $\Gamma/(1050\;\mathrm{Pa})$.

\begin{table}[!htb]
\centering
\caption{Example~\ref{exmp:blood-flow-accuracy}. $L_1$ errors and numerical orders of accuracy with initial conditions \eqref{eq:blood-flow-accuracy-initial-data}, using $P^2$ polynomial.}
\label{tab:blood-flow-accuracy-test}
\begin{tabular}{lllllllll}
\toprule
\multirow{2}{*}{$N_x$} & \multicolumn{2}{l}{$A/A_0$} & \multicolumn{2}{l}{$q$} & \multicolumn{2}{l}{$u$} & \multicolumn{2}{l}{$\Gamma$} \\ \cline{2-9}
 & $L_1$ error & order & $L_1$ error & order & $L_1$ error & order & $L_1$ error & order \\
\midrule
 20  & 2.41e-04 & /    & 1.25e-04 & /    & 6.09e-05 & /    & 4.72e-03 & / \\
 40  & 3.29e-05 & 2.87 & 1.98e-05 & 2.65 & 9.68e-06 & 2.65 & 6.44e-04 & 2.87 \\
 80  & 3.78e-06 & 3.12 & 1.14e-06 & 4.12 & 5.57e-07 & 4.12 & 7.41e-05 & 3.12 \\
160  & 4.57e-07 & 3.05 & 7.01e-08 & 4.02 & 3.43e-08 & 4.02 & 8.94e-06 & 3.05 \\
320  & 5.67e-08 & 3.01 & 7.15e-09 & 3.29 & 3.49e-09 & 3.29 & 1.11e-06 & 3.01 \\
640  & 7.07e-09 & 3.00 & 8.61e-10 & 3.05 & 4.20e-10 & 3.05 & 1.38e-07 & 3.00 \\
\bottomrule
\end{tabular}
\end{table}

The results in Table~\ref{tab:blood-flow-accuracy-test} show the expected third-order accuracy for this smooth blood-flow test. The errors in $A/A_0$, $q$, $u$, and $\Gamma$ approach third-order convergence under mesh refinement. This verifies the designed accuracy of the $P^2$ PCDG-GHR method for the blood flow equations with smoothly varying vessel properties.

\exampletitle{exmp:blood-flow-rest-equilibrium}{Blood-at-rest equilibrium preservation with discontinuous vessel properties}

We consider a blood-at-rest equilibrium with discontinuous vessel properties. We take $\rho=1050\;\mathrm{kg/m^3}$, $(m_{\mathrm p},n_{\mathrm p})=(1/2,0)$, and the computational domain $[0,0.2]\;\mathrm{m}$, with the material interface at $x=0.1\;\mathrm{m}$. With $A_{0,\mathrm{ref}}=3.1353\times10^{-4}\;\mathrm{m^2}$ and $K_{\mathrm{ref}}=58725\;\mathrm{Pa}$, the prescribed vessel properties are
\begin{equation}
(K,A_0,p_e)(x)=
\begin{cases}
(K_{\mathrm{ref}},\,2A_{0,\mathrm{ref}},\,9999.15\;\mathrm{Pa}), & x<0.1,\\
(10K_{\mathrm{ref}},\,A_{0,\mathrm{ref}},\,11332.37\;\mathrm{Pa}), & x\ge 0.1.
\end{cases}
\label{eq:blood-flow-discontinuous-vessel-data}
\end{equation}
The initial state is
\begin{equation}
(A,u)(x,0)=
\begin{cases}
(1.0228A_{0,L},\,0), & x<0.1,\\
(0.9977A_{0,R},\,0), & x\ge 0.1.
\end{cases}
\label{eq:blood-flow-rest-initial-data}
\end{equation}
The tabulated right area is rounded; in the computation, the positive root of the blood-at-rest equilibrium relation is used to recover the area so that this relation is satisfied exactly. The PCDG-GHR method is run to $t=0.1\;\mathrm{s}$ on $N_x=100$ uniform cells. The errors below are evaluated in the physical variables relative to the exact equilibrium, with the $L_1$ norm evaluated over the physical coordinate. Thus, the $L_1$ errors in $A$, $u$, and $q$ have units $\mathrm{m^3}$, $\mathrm{m^2/s}$, and $\mathrm{m^4/s}$, respectively, while the corresponding $L_\infty$ errors have units $\mathrm{m^2}$, $\mathrm{m/s}$, and $\mathrm{m^3/s}$.

\begin{table}[!htb]
\centering
\caption{Example~\ref{exmp:blood-flow-rest-equilibrium}. $L_1$ and $L_\infty$ errors for the one-dimensional blood-at-rest equilibrium with discontinuous vessel properties at $t=0.1\;\mathrm{s}$, $N_x=100$}
\label{tab:blood-flow-rest-equilibrium}
\begin{tabular}{llll}
\toprule
error & $A$ & $u$ & $q$ \\
\midrule
$L_1$      & 8.42e-19 & 1.14e-13 & 5.18e-17 \\
$L_\infty$ & 1.50e-17 & 1.44e-12 & 6.29e-16 \\
\bottomrule
\end{tabular}
\end{table}

The errors in Table~\ref{tab:blood-flow-rest-equilibrium} remain at round-off level. This confirms that the PCDG-GHR method preserves the discontinuous blood-at-rest equilibrium without generating a numerical disturbance at the material interface.

\exampletitle{exmp:blood-flow-moving-equilibrium}{Moving-flow equilibrium preservation with discontinuous vessel properties}

To test the preservation of a moving-flow equilibrium with discontinuous vessel properties, we retain the density, domain, and interface location of Example~\ref{exmp:blood-flow-rest-equilibrium} and set $(m_{\mathrm p},n_{\mathrm p})=(10,-3/2)$. The vessel properties used in the computation are
\begin{equation}
(K,A_0,p_e)(x)=
\begin{cases}
(K_{\mathrm{ref}},\,2A_{0,\mathrm{ref}},\,9999.15\;\mathrm{Pa}), & x<0.1,\\
(10K_{\mathrm{ref}},\,A_{0,\mathrm{ref}},\,78002.20\;\mathrm{Pa}), & x\ge 0.1.
\end{cases}
\label{eq:blood-flow-moving-discontinuous-vessel-data}
\end{equation}
The initial state is
\begin{equation}
(A,u)(x,0)=
\begin{cases}
(1.0228A_{0,L},\,1\;\mathrm{m/s}), & x<0.1,\\
(3.109988\times10^{-4}\;\mathrm{m^2},\,2.06224886\;\mathrm{m/s}), & x\ge 0.1.
\end{cases}
\label{eq:blood-flow-moving-initial-data}
\end{equation}
The value $p_{e,R}=11332.37\;\mathrm{Pa}$ printed with these states in \cite{pimentel2023high} does not satisfy the stationary relation. We therefore use $p_{e,R}=78002.20\;\mathrm{Pa}$, obtained by enforcing common values of $q$ and $\Gamma$ across the material interface. The tabulated right state is used to identify the subcritical branch, and the area is recovered from the equilibrium relation in the computation. The PCDG-GHR solution is computed on $N_x=100$ uniform cells up to $t=0.1\;\mathrm{s}$. The errors use the same physical-variable convention as in the preceding test.

\begin{table}[!htb]
\centering
\caption{Example~\ref{exmp:blood-flow-moving-equilibrium}. $L_1$ and $L_\infty$ errors for the one-dimensional moving-flow equilibrium with discontinuous vessel properties at $t=0.1\;\mathrm{s}$, $N_x=100$}
\label{tab:blood-flow-moving-equilibrium}
\begin{tabular}{llll}
\toprule
error & $A$ & $u$ & $q$ \\
\midrule
$L_1$      & 2.72e-18 & 5.50e-13 & 2.73e-16 \\
$L_\infty$ & 2.85e-17 & 4.38e-12 & 2.61e-15 \\
\bottomrule
\end{tabular}
\end{table}

Table~\ref{tab:blood-flow-moving-equilibrium} shows round-off-level errors in the reported physical variables. This confirms that the PCDG-GHR method preserves the subcritical moving-flow equilibrium across simultaneous jumps in the vessel stiffness, unloaded area, and external pressure.

\exampletitle{exmp:blood-flow-small-discontinuous}{Small perturbation of a discontinuous moving-flow equilibrium}

We next perturb the discontinuous subcritical moving equilibrium of Example~\ref{exmp:blood-flow-moving-equilibrium}. Let $A^{\mathrm{eq}}$ and $q^{\mathrm{eq}}$ denote the recovered equilibrium area and its constant discharge, respectively. The initial perturbation is
\begin{equation}
A(x,0)=A^{\mathrm{eq}}(x)+10^{-5}e^{-20000(x-0.05)^2}\;\mathrm{m^2},
\qquad q(x,0)=q^{\mathrm{eq}}.
\label{eq:blood-flow-small-discontinuous-perturbation}
\end{equation}

The $P^2$ PCDG-GHR method is run on $N_x=100$ uniform cells with the same equilibrium-characteristic boundary treatment and no limiter. A PCDG-GHR solution computed with the same settings on $N_x=1000$ cells is used as the reference solution. Figures~\ref{fig:blood-flow-small-discontinuous} and~\ref{fig:blood-flow-small-discontinuous-velocity} compare $A/A_0$ and $u$, respectively, at $t=0.001$, $0.002$, and $0.1\;\mathrm{s}$.

\begin{figure}[!htb]
\centering
\includegraphics[width=1\textwidth]{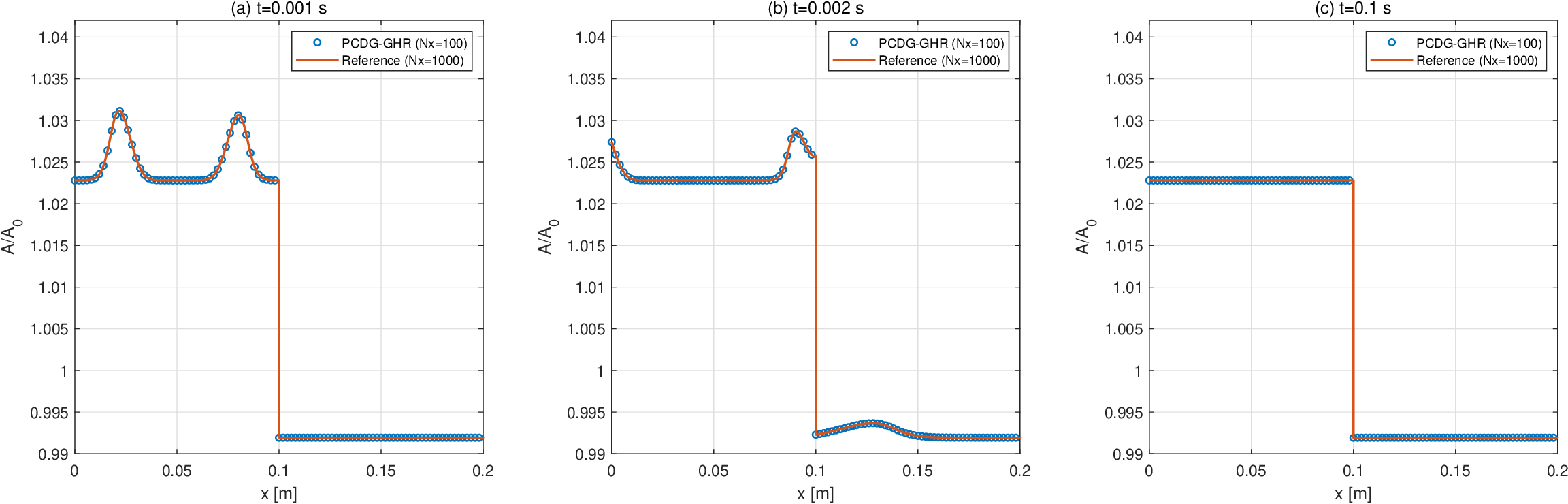}
\caption{Example~\ref{exmp:blood-flow-small-discontinuous}. Evolution of $A/A_0$ at $t=0.001$, $0.002$, and $0.1\;\mathrm{s}$. The $P^2$ PCDG-GHR solution with $N_x=100$ is compared with the $P^2$ PCDG-GHR reference solution with $N_x=1000$.}
\label{fig:blood-flow-small-discontinuous}
\end{figure}

\begin{figure}[!htb]
\centering
\includegraphics[width=1\textwidth]{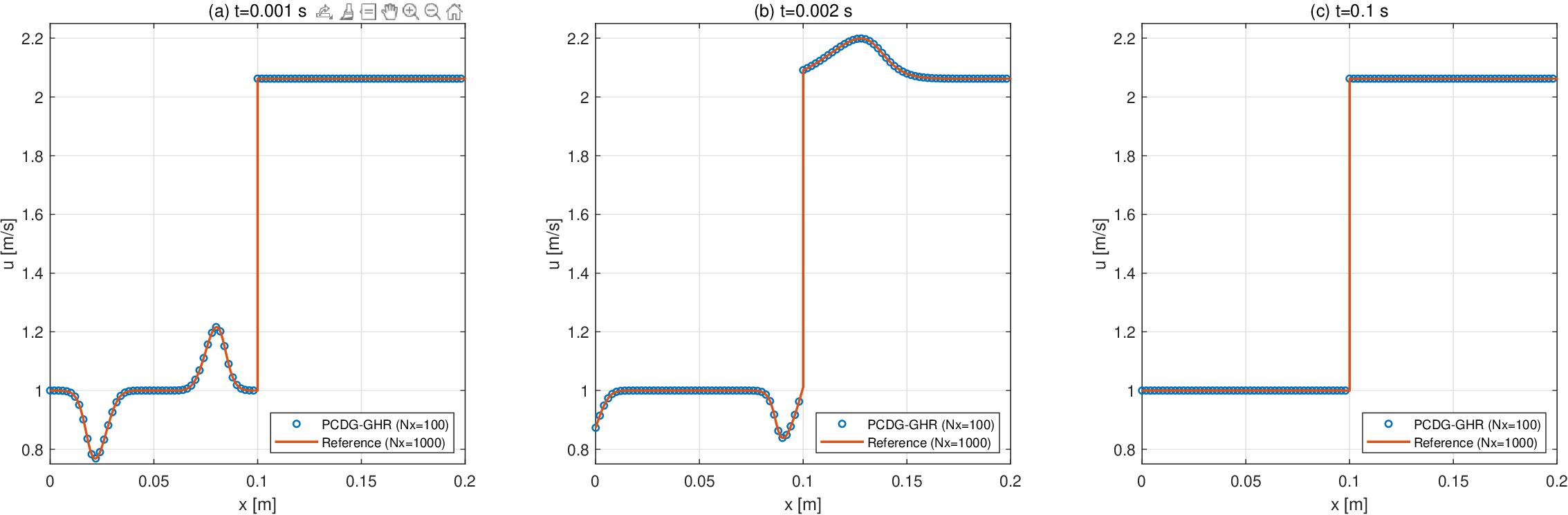}
\caption{Example~\ref{exmp:blood-flow-small-discontinuous}. Evolution of the velocity $u$ at $t=0.001$, $0.002$, and $0.1\;\mathrm{s}$. The $P^2$ PCDG-GHR solution with $N_x=100$ is compared with the $P^2$ PCDG-GHR reference solution with $N_x=1000$.}
\label{fig:blood-flow-small-discontinuous-velocity}
\end{figure}

The perturbation separates into two waves and interacts with the discontinuity in the vessel properties. Figures~\ref{fig:blood-flow-small-discontinuous} and~\ref{fig:blood-flow-small-discontinuous-velocity} show that the area ratio and velocity agree well with the refined reference solution at all displayed times. No visible spurious oscillations appear near the stationary material contact. After the perturbation leaves the domain, the solution returns to the underlying moving equilibrium.

\exampletitle{exmp:blood-flow-smooth-equilibrium}{Smooth moving-flow equilibrium preservation}

We verify preservation of a smooth moving-flow equilibrium using the density, tube law, domain, and vessel properties of Example~\ref{exmp:blood-flow-accuracy}. Let $(A^{\mathrm{eq}},q^{\mathrm{eq}})$ denote the subcritical stationary solution satisfying $A^{\mathrm{eq}}(0)=A_L=2.0456A_{0,L}$ and $u^{\mathrm{eq}}(0)=1\;\mathrm{m/s}$. For each $x\in[0,5]\;\mathrm{m}$, the area is recovered on the subcritical branch from the constant equilibrium variables and the local values of $(K,A_0,p_e)$.

The PCDG-GHR method is applied on $N_x=100$ uniform cells with equilibrium boundary conditions. The limiter is turned off, and the solution is evolved to $t=0.1\;\mathrm{s}$. Since the smooth equilibrium is not represented exactly in the $P^2$ polynomial space, Table~\ref{tab:blood-flow-smooth-equilibrium} reports the errors in $A$, $u$, and $q$ relative to the initial discrete equilibrium, using the same physical-coordinate $L_1$ and $L_\infty$ norms as above.

\begin{table}[!htb]
\centering
\caption{Example~\ref{exmp:blood-flow-smooth-equilibrium}. $L_1$ and $L_\infty$ errors for the one-dimensional smooth moving-flow equilibrium at $t=0.1\;\mathrm{s}$, $N_x=100$}
\label{tab:blood-flow-smooth-equilibrium}
\begin{tabular}{llll}
\toprule
error & $A$ & $u$ & $q$ \\
\midrule
$L_1$      & 1.31e-18 & 2.64e-14 & 2.77e-17 \\
$L_\infty$ & 1.55e-18 & 2.99e-14 & 3.06e-17 \\
\bottomrule
\end{tabular}
\end{table}

All reported errors remain at round-off level. These results confirm that the PCDG-GHR method preserves the smooth subcritical moving-flow equilibrium with spatially varying vessel properties.

\exampletitle{exmp:blood-flow-small-smooth}{Small perturbation of a smooth moving-flow equilibrium}

We next perturb the smooth moving-flow equilibrium of Example~\ref{exmp:blood-flow-smooth-equilibrium}. The initial data are
\begin{equation}
A(x,0)=A^{\mathrm{eq}}(x)+10^{-7}e^{-40(x-1)^2}\;\mathrm{m^2},
\qquad q(x,0)=q^{\mathrm{eq}},
\label{eq:blood-flow-small-smooth-perturbation}
\end{equation}
The $P^2$ PCDG-GHR method is applied on $N_x=100$ uniform cells with the same equilibrium-characteristic boundary treatment and no limiter. The reference solution is computed by the same method on $N_x=1000$ cells.

\begin{figure}[!htb]
\centering
\includegraphics[width=1\textwidth]{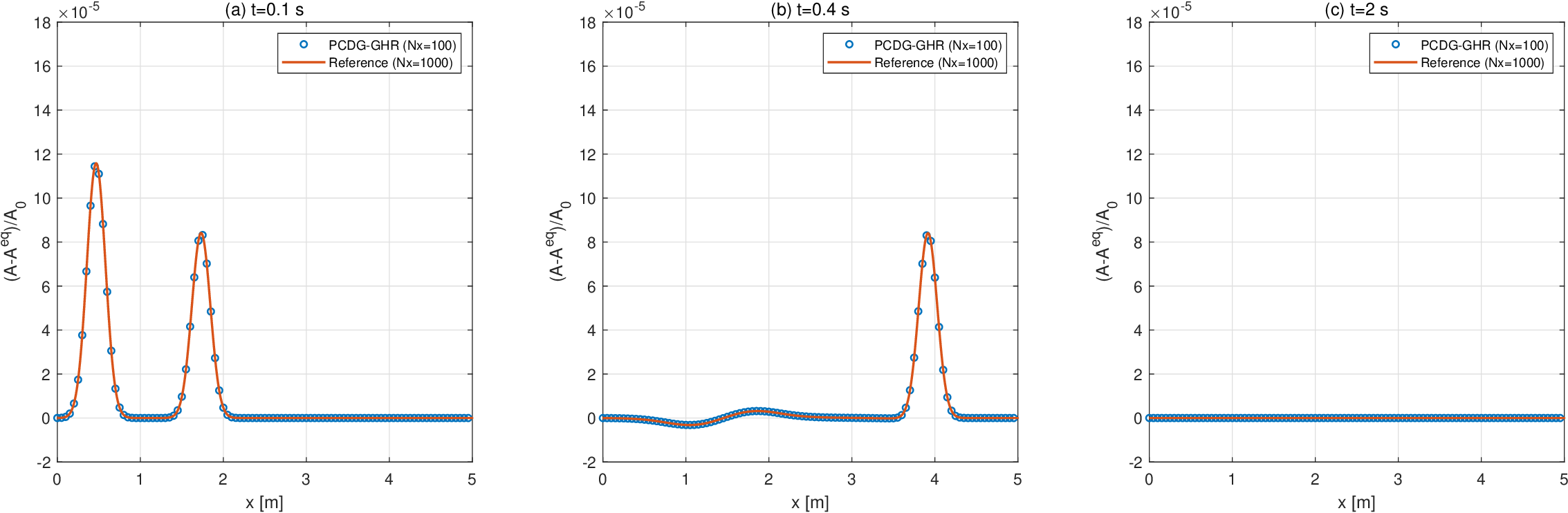}
\caption{Example~\ref{exmp:blood-flow-small-smooth}. Evolution of the perturbation $(A-A^{\mathrm{eq}})/A_0$ at $t=0.1$, $0.4$, and $2.0\;\mathrm{s}$. The $P^2$ PCDG-GHR solution with $N_x=100$ is compared with the $P^2$ PCDG-GHR reference solution with $N_x=1000$.}
\label{fig:blood-flow-small-smooth}
\end{figure}

\begin{figure}[!htb]
\centering
\includegraphics[width=1\textwidth]{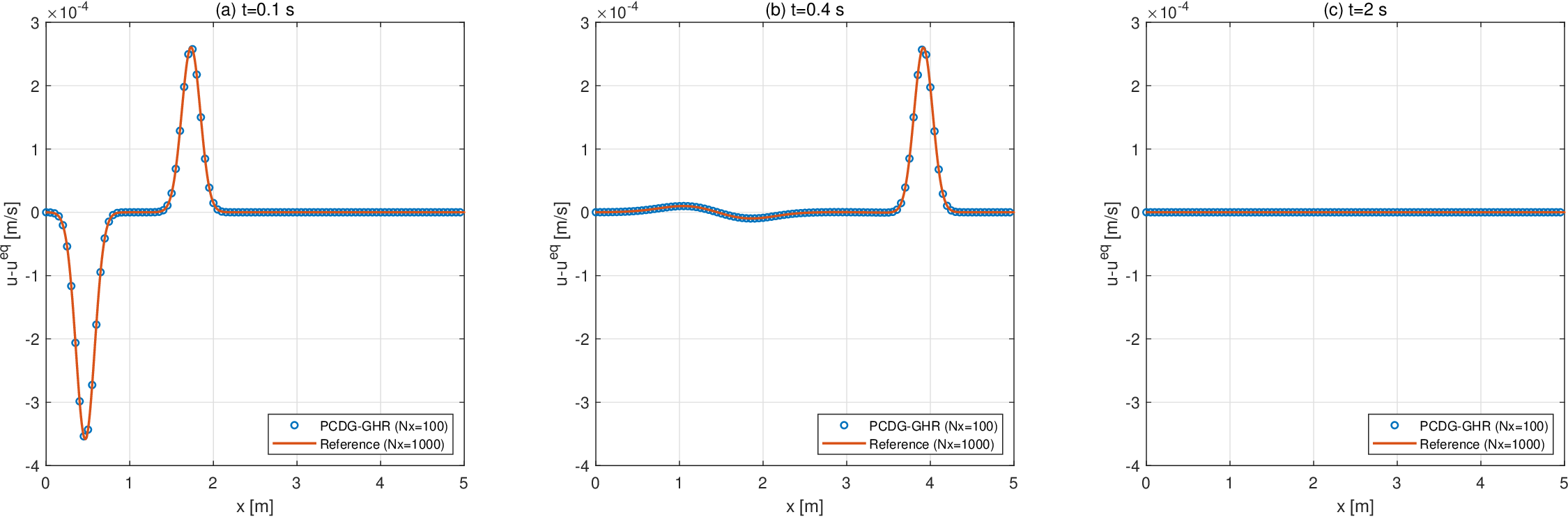}
\caption{Example~\ref{exmp:blood-flow-small-smooth}. Evolution of the velocity perturbation $u-u^{\mathrm{eq}}$ at $t=0.1$, $0.4$, and $2.0\;\mathrm{s}$. The $P^2$ PCDG-GHR solution with $N_x=100$ is compared with the $P^2$ PCDG-GHR reference solution with $N_x=1000$.}
\label{fig:blood-flow-small-smooth-velocity}
\end{figure}

Figures~\ref{fig:blood-flow-small-smooth} and~\ref{fig:blood-flow-small-smooth-velocity} show that the method captures the propagation and deformation of the small perturbation in both area and velocity. The solution on $N_x=100$ cells agrees well with the refined reference solution at all displayed times. After the waves leave the domain, the solution returns to the smooth moving equilibrium without a visible residual wave.

\exampletitle{exmp:blood-flow-riemann}{Riemann problem with discontinuous vessel properties}

Finally, we consider a Riemann problem in which both physical variables and all three prescribed vessel properties are discontinuous. We take $\rho=1050\;\mathrm{kg/m^3}$, $(m_{\mathrm p},n_{\mathrm p})=(10,-3/2)$, and place the initial discontinuity at $x=0.25\;\mathrm{m}$ in the domain $[0,0.5]\;\mathrm{m}$. The initial data are
\begin{equation}
\begin{array}{c|ccccc}
 & A\,[\mathrm{m^2}] & u\,[\mathrm{m/s}] & K\,[\mathrm{Pa}]
 & A_0\,[\mathrm{m^2}] & p_e\,[\mathrm{Pa}] \\ \hline
x<0.25
 & 1.65\times10^{-4} & 1.8 & 50 & 1.1\times10^{-4} & 66.661 \\
x\ge 0.25
 & 1.32\times10^{-4} & 2.0 & 500 & 1.2\times10^{-4} & 6.6661
\end{array}
\label{eq:blood-flow-riemann-initial-data}
\end{equation}
where $q=Au$. Free boundary conditions are imposed. The $P^2$ PCDG-GHR and standard PCDG solutions are computed with $N_x=400$ uniform cells up to $t=0.03\;\mathrm{s}$. Here the standard PCDG method removes the GHR treatment and the reference equilibrium decomposition, while all other numerical settings remain the same as those of the PCDG-GHR method. The TVB and area-positivity limiters are activated for both computations. A same-path first-order GHR solution computed on $N_x=3200$ cells is used as the numerical reference solution; here same-path means that it uses the same seven-segment augmented path as PCDG-GHR.

\begin{figure}[!htb]
\centering
\includegraphics[width=1\textwidth]{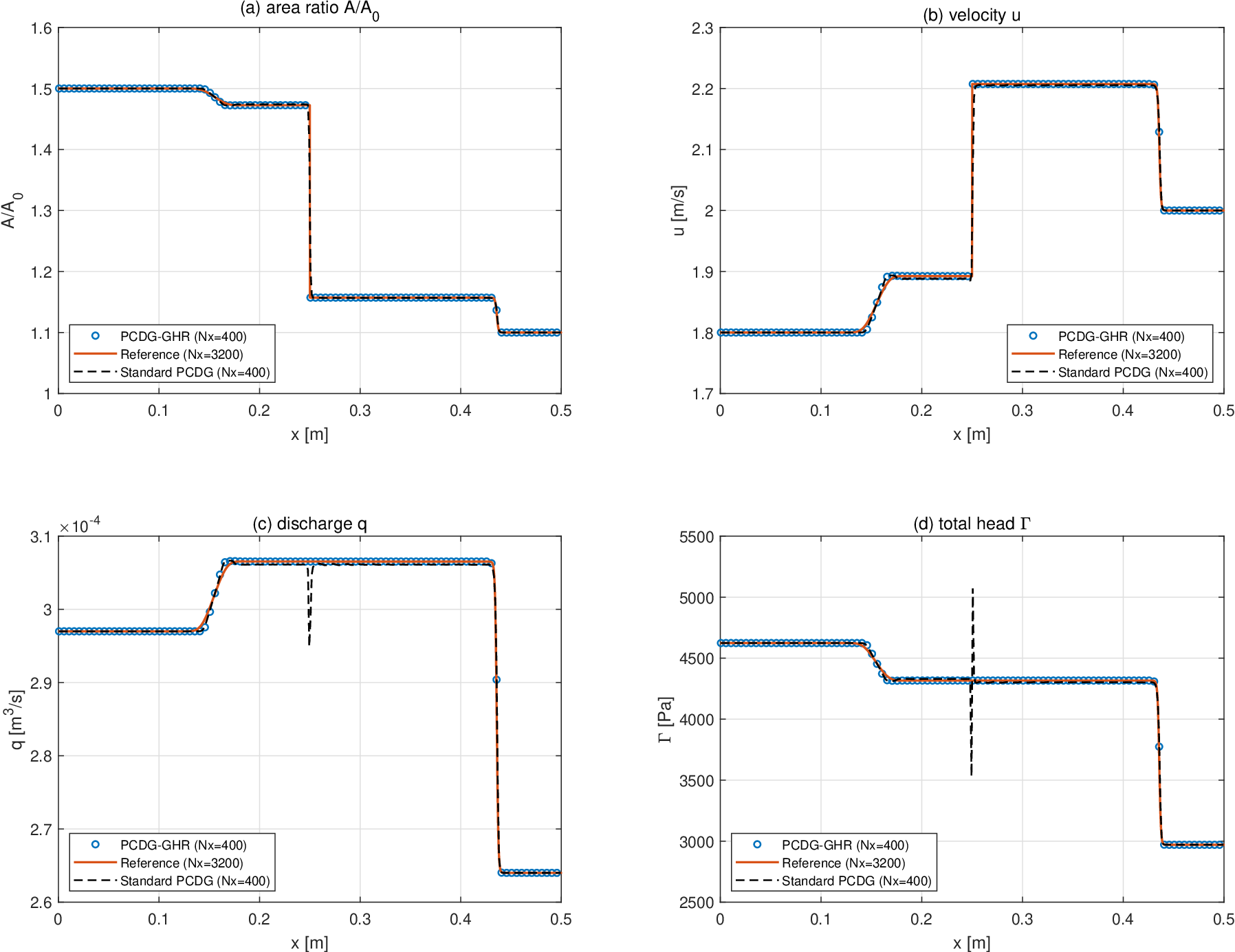}
\caption{Example~\ref{exmp:blood-flow-riemann}. Riemann problem with discontinuous vessel properties at $t=0.03\;\mathrm{s}$. The $P^2$ PCDG-GHR solution with $N_x=400$ is compared with the same-path P0 reference solution with $N_x=3200$ and the standard $P^2$ PCDG solution with $N_x=400$.}
\label{fig:blood-flow-riemann-test15}
\end{figure}

The solution consists of a left-moving rarefaction, a stationary material contact, and a right-moving shock. Figure~\ref{fig:blood-flow-riemann-test15} shows that the PCDG-GHR method captures these waves and agrees well with the reference solution for $A/A_0$, $u$, $q$, and $\Gamma$. In particular, the right-moving shock is sharply resolved, and no visible spurious oscillations appear at the stationary contact. By contrast, the standard PCDG solution develops nonphysical deviations near the discontinuity in the vessel properties. This comparison demonstrates the importance of the GHR path-conservative interface treatment for resolving the material contact.

\begin{remark}
The full well-balanced formulation with the reference equilibrium decomposition is most useful when the solution is an exact equilibrium or a small perturbation of a prescribed equilibrium family. In such regimes, local recovery removes the cell-interior discrete imbalance caused by polynomial projection errors. In the discontinuity-dominated Riemann tests for the two-layer shallow water and blood flow equations, the local states may span different admissible branches or characteristic regimes and need not remain close to a single equilibrium family. In both tests, we retain the path-conservative GHR interface treatment and omit the reference equilibrium decomposition in the volume residual. The decomposition is designed to remove discrete imbalances caused by projecting the equilibrium into the DG polynomial space; it does not determine which weak shock solution is selected far from equilibrium. The Riemann comparisons therefore assess primarily the GHR interface treatment together with the limiters used in the computations.
\end{remark}

\section{Conclusion}
\label{se:con}

We have developed a high-order well-balanced path-conservative discontinuous Galerkin method for one-dimensional balance laws with non-conservative products. The method combines generalized hydrostatic reconstruction at cell interfaces with a cellwise reference-equilibrium decomposition. The interface reconstruction couples effective numerical fluxes with Dal Maso-LeFloch-Murat path integrals, whereas the equilibrium decomposition cancels projection-induced residuals in the volume discretization. Under the stated recovery, admissibility, path-consistency, interface-matching, and boundary assumptions, the spatial discretization is path conservative, and prescribed rest and moving equilibria are exactly preserved by both the semi-discrete scheme and its fully discrete SSP-RK approximation.

Applications to the two-layer shallow water equations and a blood-flow model confirm the designed high-order accuracy and round-off-level preservation of smooth and discontinuous equilibria. The method also accurately resolves small perturbations around moving equilibria. In the discontinuity-dominated Riemann tests, the GHR interface treatment—used without the reference-equilibrium volume decomposition—captures shocks and stationary material contacts without visible spurious oscillations. The present formulation is restricted to one-dimensional systems and locally recoverable equilibrium branches. Future work will address multidimensional extensions, multilayer shallow water systems, and blood-flow networks with junctions and bifurcations.

\section*{Acknowledgments}
The research of Yinhua Xia was partially supported by the Anhui Provincial Natural Science Foundation (grant No.~2408085J004) and the National Natural Science Foundation of China (NSFC grant No.~12671438). The work of Yan Xu was partially supported by the National Natural Science Foundation of China (NSFC grant No. 12471347),  and Laoshan Laboratory (No. LSKJ202300305).

\appendix

\section{Fluctuation form and path conservation of the standard PCDG scheme}
\label{app:standard-pcdg-path-conservation}

Following \cite{pares2006numerical}, the semi-discrete path-conservative DG scheme can be written in the abstract fluctuation form
\begin{equation}
\begin{aligned}
&\int_{I_j} (\boldsymbol{u}_h)_t \cdot \boldsymbol{\phi} \, dx
+ \int_{I_j} \mathcal{A}(\boldsymbol{u}_h)(\boldsymbol{u}_h)_x \cdot \boldsymbol{\phi} \, dx
- \int_{I_j} \boldsymbol{s}(\boldsymbol{u}_h,\boldsymbol{\sigma})\cdot \boldsymbol{\phi} \, dx \\
&\qquad
+ \boldsymbol{D}^+_{j-\frac{1}{2}}\cdot \boldsymbol{\phi}^+_{j-\frac{1}{2}}
+ \boldsymbol{D}^-_{j+\frac{1}{2}}\cdot \boldsymbol{\phi}^-_{j+\frac{1}{2}} = 0,
\end{aligned}
\label{eq:abstract-fluctuation-form}
\end{equation}
where $\mathcal{A}(\boldsymbol{u})=\partial \boldsymbol{f}/\partial \boldsymbol{u} + G(\boldsymbol{u})$. The fluctuations satisfy
\begin{equation}
\boldsymbol{D}^\pm(\boldsymbol{u},\boldsymbol{u})=0,
\qquad \forall\, \boldsymbol{u}\in\Omega,
\label{eq:fluctuation-consistency}
\end{equation}
and
\begin{equation}
\boldsymbol{D}^-(\boldsymbol{u}^-,\boldsymbol{u}^+) + \boldsymbol{D}^+(\boldsymbol{u}^-,\boldsymbol{u}^+)
=
\int_0^1 \mathcal{A}(\boldsymbol{\Phi}(s;\boldsymbol{u}^-,\boldsymbol{u}^+))\boldsymbol{\Phi}_s(s;\boldsymbol{u}^-,\boldsymbol{u}^+) \, ds.
\label{eq:fluctuation-path-conservation}
\end{equation}
Expanding $\mathcal{A}$ in \eqref{eq:abstract-fluctuation-form} gives
\begin{align}
\frac{d}{dt}\int_{I_j} \boldsymbol{u}_h \cdot \boldsymbol{\phi} \, dx
&=
\int_{I_j} \boldsymbol{f}(\boldsymbol{u}_h)\cdot \boldsymbol{\phi}_x \, dx
- \int_{I_j} G(\boldsymbol{u}_h)(\boldsymbol{u}_h)_x \cdot \boldsymbol{\phi} \, dx
+ \int_{I_j} \boldsymbol{s}(\boldsymbol{u}_h,\boldsymbol{\sigma})\cdot \boldsymbol{\phi} \, dx \nonumber\\
&\quad
- \bigl(\boldsymbol{f}(\boldsymbol{u}^-_{j+\frac{1}{2}})+\boldsymbol{D}^-_{j+\frac{1}{2}}\bigr)\cdot \boldsymbol{\phi}^-_{j+\frac{1}{2}}
+ \bigl(\boldsymbol{f}(\boldsymbol{u}^+_{j-\frac{1}{2}})-\boldsymbol{D}^+_{j-\frac{1}{2}}\bigr)\cdot \boldsymbol{\phi}^+_{j-\frac{1}{2}}.
\label{eq:expanded-fluctuation-form}
\end{align}
Define
\begin{equation}
\boldsymbol{D}^-_{j+\frac{1}{2}}
=
\widehat{\boldsymbol{f}}_{j+\frac{1}{2}} - \boldsymbol{f}(\boldsymbol{u}^-_{j+\frac{1}{2}})
+ \frac{1}{2}\boldsymbol{g}_{\boldsymbol{\Phi},j+\frac{1}{2}},
\qquad
\boldsymbol{D}^+_{j+\frac{1}{2}}
=
\boldsymbol{f}(\boldsymbol{u}^+_{j+\frac{1}{2}}) - \widehat{\boldsymbol{f}}_{j+\frac{1}{2}}
+ \frac{1}{2}\boldsymbol{g}_{\boldsymbol{\Phi},j+\frac{1}{2}}.
\label{eq:fluctuation-definitions}
\end{equation}
Then
\begin{equation}
\boldsymbol{f}(\boldsymbol{u}^-_{j+\frac{1}{2}})+\boldsymbol{D}^-_{j+\frac{1}{2}}
=
\widehat{\boldsymbol{f}}_{j+\frac{1}{2}}+\frac{1}{2}\boldsymbol{g}_{\boldsymbol{\Phi},j+\frac{1}{2}},
\qquad
\boldsymbol{f}(\boldsymbol{u}^+_{j-\frac{1}{2}})-\boldsymbol{D}^+_{j-\frac{1}{2}}
=
\widehat{\boldsymbol{f}}_{j-\frac{1}{2}}-\frac{1}{2}\boldsymbol{g}_{\boldsymbol{\Phi},j-\frac{1}{2}}.
\label{eq:interface-fluctuation-relations}
\end{equation}
Substitution into \eqref{eq:expanded-fluctuation-form} yields \eqref{eq:standard-pcdg-form}. Consistency of the numerical flux and the identity $\boldsymbol{g}_{\boldsymbol{\Phi}}=0$ for coincident states give \eqref{eq:fluctuation-consistency}. Moreover,
\begin{align}
\boldsymbol{D}^-_{j+\frac{1}{2}}+\boldsymbol{D}^+_{j+\frac{1}{2}}
&=
\boldsymbol{f}(\boldsymbol{u}^+_{j+\frac{1}{2}})-\boldsymbol{f}(\boldsymbol{u}^-_{j+\frac{1}{2}})
+ \boldsymbol{g}_{\boldsymbol{\Phi},j+\frac{1}{2}} \nonumber\\
&=
\int_0^1 \mathcal{A}(\boldsymbol{\Phi}_{j+\frac{1}{2}}(s))(\boldsymbol{\Phi}_{j+\frac{1}{2}})_s(s)\,ds,
\label{eq:path-conservation-proof}
\end{align}
which proves the path-conservation property of the standard semi-discrete PCDG formulation.

\bibliographystyle{abbrv}
\bibliography{sample}

\end{document}